\numberwithin{equation}{section}
\numberwithin{figure}{section}
\theoremstyle{plain}
\newtheorem{thm}{\protect\theoremname}[section]
\theoremstyle{plain}
\theoremstyle{definition}
\theoremstyle{plain}
\newtheorem{cor}[thm]{\protect\corollaryname}
\theoremstyle{plain}
\newtheorem{rem}[thm]{\protect\remarkname}
\theoremstyle{plain}
\providecommand{\definitionname}{Definition}
\providecommand{\lemmaname}{Lemma}
\providecommand{\theoremname}{Theorem}
\providecommand{\corollaryname}{Corollary}
\providecommand{\remarkname}{Remark}
\providecommand{\propositionname}{Proposition}
\DeclareMathOperator{\loc}{loc}
\DeclareMathOperator{\ess}{ess}
\newcommand{\I}{\mathrm{I}}
\begin{document}

\title[Sobolev $(p,q)$-extension operators and Neumann eigenvalues]{Sobolev $(p,q)$-extension operators and  Neumann eigenvalues}

\author{Vladimir Gol'dshtein and Alexander Ukhlov}

\begin{abstract}
In this article, we consider $(p,q)$-extension operators, $1 < q \le p < \infty$, on Sobolev spaces. Based on composition operators on Sobolev spaces, we construct the extension operators in outward cuspidal domains with estimates of their norms. Using these $(p,q)$-extension operators, we prove estimates for the non-linear Neumann eigenvalues of the $p$-Laplace operator in outward cuspidal domains.
\end{abstract}

\maketitle
\footnotetext{\textbf{Key words and phrases:} Sobolev spaces, Extension operators, Quasiconformal mappings, Neumann eigenvalue problem} 
\footnotetext{\textbf{2020 Mathematics Subject Classification:} 46E35, 30C65, 35P15.}

\section{Introduction}

Let $L^1_p(\Omega)$, where $\Omega$ is a domain in $\mathbb{R}^n$, $n \geq 2$, be the seminormed Sobolev space. 
Recall that $\Omega$ is called a $(p,q)$\textit{-extension domain}, $1 \leq q \leq p \leq \infty$, if there exists a bounded operator
\[
E: L^1_p(\Omega) \to L^1_q(\mathbb{R}^n),
\]
such that $E(f)\big|_{\Omega} = f$ for every function $f \in L^1_p(\Omega)$. 
This operator is referred to as a bounded $(p,q)$\textit{-extension operator}.

We define the operator bound as a measure of its boundedness:
\[
\|E\| = \sup_{f \in L^1_p(\Omega) \setminus \{0\}} \frac{\|E(f)\|_{L^1_q(\mathbb{R}^n)}}{\|f\|_{L^1_p(\Omega)}}.
\]
If $E$ is linear, then $\|E\|$ is the usual operator norm.

Two problems are addressed in the present article. The first problem concerns the explicit construction of bounded $(p,q)$\nobreakdash-extension operators with quantitative estimates of their norms in outward cuspidal domains. 
The study of sharp constants in Sobolev-type inequalities goes back to the classical monograph of G.~P\'olya and G.~Szeg\H{o}~\cite{PS51}, and remains one of the central themes in geometric analysis. 
In particular, obtaining explicit bounds for the norms of extension operators is a subtle and difficult problem; see, for instance, \cite{Mik79}.

The second problem concerns spectral estimates for Neumann eigenvalues of non-linear elliptic operators in non-Lipschitz domains obtained by means of these extension norm bounds. 
The main results are summarized below; detailed statements and proofs are given in the subsequent sections.

The first problem goes back to the classical works of A.~P.~Calderon~\cite{C61} and E.~M.~Stein~\cite{S70}, who proved that bounded Lipschitz domains are $(p,p)$\nobreakdash-extension domains for all $1 \leq p \leq \infty$. 
This result was later generalized by P.~Jones~\cite{J81} to the class of so\mbox{-}called uniform domains.

However, obtaining estimates for the norms of extension operators is a more complicated problem. 
Recently, following the approach of~\cite{S70}, estimates for the norms of extension operators in Lipschitz domains were obtained in~\cite{T15}. In the present article we obtain bounds for the norms of extension operators in outward cuspidal domains, which poses a significant technical challenge. These results are crucial for proving estimates of Neumann eigenvalues of elliptic operators~\cite{GPU20}.

The first necessary and sufficient conditions for the boundedness of the extension operator
\[
E: L^1_2(\Omega) \to L^1_2(\mathbb{R}^2), \quad \Omega \subset \mathbb{R}^2,
\]
were established in~\cite{VGL79}. It was shown that a simply connected domain
\(\Omega \subset \mathbb{R}^2\) is a $(2,2)$\nobreakdash-extension domain \emph{if and only if}
\(\Omega\) is an Ahlfors domain~\cite{Ahl66} (that is, a quasidisk).
Necessary and sufficient conditions for the existence of $(p,p)$-extension operators, $p>2$,
in planar simply connected domains were obtained in~\cite{Sh10,SZ16}
in terms of the subhyperbolic metric.

However, the characterization of $(p,q)$\nobreakdash-extension domains remains an open and complicated problem, despite intensive research in geometric analysis and PDEs. We note the works~\cite{AO12,GS82,HKT08,KRZ25,KUZ22,KZ22,MP86,MP87,RZ24,U20}, which are devoted to the $(p,q)$\nobreakdash-extension problem.

In the present work, based on the geometric theory of composition operators on Sobolev spaces~\cite{VU04, VU05}, we construct $(p,q)$-extension operators in outward cuspidal domains and obtain explicit estimates of their operator norms. Remark that the geometric theory of composition operators on Sobolev spaces goes back to~\cite{U93} and forms a significant part of the modern geometric analysis of Sobolev spaces; see, for example,~\cite{HK12,KKSS14,Mar90,MU24,PV25}.

The first main result of the present article gives the following sharp $(p,q)$-extension theorem together with an explicit norm estimate:

\medskip
\noindent
\textit{Let $\widetilde{\Omega}_{\widetilde{\gamma}} \subset \mathbb{B}^n$, where $\mathbb{B}^n \subset \mathbb{R}^n$ is the unit ball, be an outward $\widetilde{\gamma}$-cuspidal domain with $1<\widetilde{\gamma}<\infty$. Set $\gamma = \widetilde{\gamma} - 1$. Then, for any $\frac{n+(n-1)\gamma}{n} < p < \infty$, the extension operator}
\[
E : L^1_p(\widetilde{\Omega}_{\widetilde{\gamma}}) \to L^1_{q}(\mathbb{B}^n)
\]
\textit{is bounded for all $1 \le q < \frac{n p}{n + (n-1)\gamma}$, and satisfies}
\[
\|E\| \le \left( |\mathbb{B}^n|^{1-\alpha} + |\mathbb{S}^{n-2}|^{\,1-\alpha} \,C_{\gamma}^{\alpha} \left[ \frac{1-\alpha}{\,n(1-\alpha) - \alpha (n-1)\gamma\,} \right]^{1-\alpha} \right)^{\frac{1}{q}},
\]
\textit{with $\frac{1}{p} \le \alpha < \frac{n}{n + (n-1)\gamma}$ arbitrary and}
$$
C_{\gamma} = \pi \left(1 + (\gamma+1)^2 \frac{\pi^4}{(\pi-1)^4} + \frac{1}{(\pi-1)^2}\right)^{\frac{p}{2}}.
$$

\medskip

Based on the obtained extension norm estimates, we turn to spectral problems for the $p$-Laplace operator. 
Estimates for the first non-trivial Neumann eigenvalue of the $p$-Laplace operator in non-convex domains represent a long-standing and difficult problem~\cite{ENT,PS51}.
 In the classical linear case \(p=2\), Payne and Weinberger~\cite{PW} proved that for any convex domain \(\Omega \subset \mathbb{R}^n\), \(n \ge 2\),
\[
\mu(\Omega) \,\geq\, \frac{\pi^2}{d(\Omega)^2},
\]
where \(d(\Omega)\) denotes the Euclidean diameter of~\(\Omega\).

In the case of general non-convex domains, such an estimate in terms of \(d(\Omega)\) is no longer valid. A simple counterexample is provided by a domain composed of two identical squares connected by a narrow corridor~\cite{BCDL16}, for which \(\mu(\Omega)\) can be made arbitrarily small while the diameter \(d(\Omega)\) remains fixed.

These norm bounds for extension operators serve as the principal tool in deriving sharp lower estimates for the first non-trivial Neumann eigenvalue in the following nonlinear eigenvalue problem:
\begin{equation*}
-\Delta_p u:=-\text{div}(|\nabla u|^{p-2}\nabla u)=\mu\|u\|_{L_r(\widetilde{\Omega}_{\widetilde{\gamma}})}^{p-r}|u|^{r-2}u\text{ in }\widetilde{\Omega}_{\widetilde{\gamma}},\quad\frac{\partial u}{\partial\nu}=0\text{ on }\partial \widetilde{\Omega}_{\widetilde{\gamma}}.
\end{equation*}

The corresponding second main result of the present article states:

\medskip
\noindent
\textit{Let $\widetilde{\Omega}_{\widetilde{\gamma}} \subset \mathbb{B}^n$ be an outward $\widetilde{\gamma}$-cuspidal domain, $1<\widetilde{\gamma}<\infty$. Set $\gamma = \widetilde{\gamma} - 1$. Suppose that $\frac{n+(n-1)\gamma}{n} < p < n+(n-1)\gamma$, $1 \le q < \frac{n p}{n + (n-1)\gamma}$, $r<\frac{nq}{n-q}$.
Then the first non-trivial Neumann $(p,r)$-eigenvalue
\begin{multline*}
\mu_{p,r}(\widetilde{\Omega}_{\widetilde{\gamma}}) \\
\geq {\left(|\mathbb{B}^n|^{1-\alpha} + |\mathbb{S}^{n-2}|^{\,1-\alpha} \,C_{\gamma}^{\alpha} \left[ \frac{1-\alpha}{\,n(1-\alpha) - \alpha (n-1)\gamma\,} \right]^{1-\alpha}\right)^{-\frac{1}{\alpha}}}\left(\mu_{q,r}(\mathbb B^n)\right)^{\frac{1}{\alpha}},\\ 0<\alpha=\frac{q}{p}<1.
\end{multline*}
}

\medskip

We remark that in \cite{BCT9, BCT15}, lower estimates involving the isoperimetric constant relative to $\Omega$ were obtained. The approach to spectral estimates in non-convex domains based on the geometric theory of composition operators on Sobolev spaces was given in \cite{GU16,GU17}. In the recent works \cite{GPU24, MU26}, the non-linear Neumann eigenvalue problems were studied in outward cuspidal domains.

\section{Sobolev spaces and composition operators }

\subsection{Sobolev spaces and composition operators }

Let $\Omega$ be a domain in the Euclidean space $\mathbb R^n$, $n\geq 2$. The Sobolev space $W^1_p(\Omega)$, $1\leq p\leq\infty$, is defined \cite{M}
as a Banach space of locally integrable weakly differentiable functions $f:\Omega\to\mathbb{R}$ equipped with the following norm: 
\[
\|f\|_{W^1_p(\Omega)}=\| f\|_{L_p(\Omega)}+\|\nabla f\|_{L_p(\Omega)},
\]
where $\nabla f$ is the weak gradient of the function $f$. In this article we work with the homogeneous seminormed Sobolev space $L^1_p(\Omega)$, $1\leq p\leq\infty$, with the following seminorm: 
\[
\|f\|_{L^1_p(\Omega)}=\|\nabla f\|_{L_p(\Omega)}.
\]

In accordance with the non-linear potential theory \cite{MH72} we consider elements of Sobolev spaces $W^1_p(\Omega)$ as classes of equivalence of quasicontinuous functions defined up to a set of $p$-capacity zero  \cite{M}. Recall that a function $f$ is termed quasicontinuous if for any $\varepsilon >0$ there is an open  set $U_{\varepsilon}$ such that the $p$-capacity of $U_{\varepsilon}$ is less than $\varepsilon$ and on the set $\Omega\setminus U_{\varepsilon}$ the function  $f$ is continuous (see, for example, \cite{HKM,M}).

Let $\Omega$ and $\widetilde{\Omega}$ be domains in $\mathbb R^n$. Then a homeomorphism $\varphi:\Omega\to \widetilde{\Omega}$ generates a bounded embedding operator
\[
\varphi^{\ast}:L^1_p(\widetilde{\Omega})\to L^1_q(\Omega),\,\,\,1\leq q\leq p\leq\infty,
\]
by the composition rule $\varphi^{\ast}(f)=f\circ\varphi$, if for
any function $f\in L^1_p(\widetilde{\Omega})$, the composition $\varphi^{\ast}(f)\in L^1_q(\Omega)$
is defined quasi-everywhere in $\Omega$ and there exists a constant $C_{p,q}(\varphi;\Omega)<\infty$ such that 
\[
\|\varphi^{\ast}(f)\|_{L^1_q(\Omega)}\leq C_{p,q}(\varphi;\Omega)\|f\|_{L^1_p(\widetilde{\Omega})}.
\]

In the geometric function theory composition operators on Sobolev spaces arise in the work \cite{VG75} and have numerous applications in the geometric analysis of PDE. Recall that the $p$-dilatation \cite{Ger69} of a Sobolev mapping $\varphi: \Omega\to \widetilde{\Omega}$ at a point $x\in\Omega$ is defined as
$$
K_p(x)=\inf \{k(x): |D\varphi(x)|\leq k(x) |J(x,\varphi)|^{\frac{1}{p}}\}.
$$

The following theorem, see \cite{GU25} and references therein, gives the characterization of composition operators in terms of integral characteristics of mappings of finite distortion. Recall that a weakly differentiable mapping $\varphi:\Omega\to\mathbb{R}^{n}$ is a mapping of finite distortion if $D\varphi(x)=0$ for almost all $x$ from $Z=\{x\in\Omega: J(x,\varphi)=0\}$ \cite{VGR}. 

\begin{thm}
\label{CompTh} Let $\varphi:\Omega\to\widetilde{\Omega}$ be a homeomorphism
between two domains $\Omega$ and $\widetilde{\Omega}$. Then $\varphi$ generates a bounded composition
operator 
\[
\varphi^{\ast}:L^1_p(\widetilde{\Omega})\to L^1_{q}(\Omega),\,\,\,1\leq q\leq p<\infty,
\]
 if and only if $\varphi\in W^1_{q,\loc}(\Omega)$, has finite distortion,
and 
\[
K_{p,q}(\varphi;\Omega) := \|K_p\|_{L_{\kappa}(\Omega)}<\infty, \,\,1/q-1/p=1/{\kappa}\,\,(\kappa=\infty, \text{ if } p=q).
\]
The norm of the operator $\varphi^\ast$ is estimated as $\|\varphi^\ast\| \leq K_{p,q}(\varphi;\Omega)$.
\end{thm}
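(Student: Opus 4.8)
The plan is to prove the two implications separately, arranging matters so that the sufficiency direction also delivers the norm bound $\|\varphi^\ast\| \leq K_{p,q}(\varphi;\Omega)$. Sufficiency is a direct chain-rule computation; necessity requires recovering the regularity, the finite distortion, and the integral distortion bound from the mere boundedness of $\varphi^\ast$, which I would do by differentiating an associated set function.

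For \emph{sufficiency}, assume $\varphi\in W^1_{q,\loc}(\Omega)$ has finite distortion with $K_{p,q}(\varphi;\Omega)<\infty$. The first step is the chain rule $\nabla(f\circ\varphi)(x)=D\varphi(x)^{T}(\nabla f)(\varphi(x))$ for a.e. $x$: I would prove it for smooth $f$ (using the superposition rule for Sobolev maps of finite distortion together with $D\varphi=0$ on $Z=\{J(x,\varphi)=0\}$), and then approximate a general $f\in L^1_p(\widetilde{\Omega})$ by smooth $f_k$, the estimate below showing that $\{f_k\circ\varphi\}$ is Cauchy in $L^1_q(\Omega)$. With the chain rule in hand, the pointwise bound
\[
|\nabla(f\circ\varphi)(x)|\leq |D\varphi(x)|\,|(\nabla f)(\varphi(x))|\leq K_p(x)\,|J(x,\varphi)|^{1/p}\,|(\nabla f)(\varphi(x))|
\]
holds a.e. Raising to the $q$-th power, integrating, and applying H\"older's inequality with the conjugate exponents $\kappa/q$ and $p/q$ (legitimate since $1/q-1/p=1/\kappa$ gives $q/\kappa+q/p=1$) yields
\[
\|\nabla(f\circ\varphi)\|_{L_q(\Omega)}^q\leq \|K_p\|_{L_\kappa(\Omega)}^q\left(\int_\Omega |J(x,\varphi)|\,|(\nabla f)(\varphi(x))|^p\,dx\right)^{q/p}.
\]
Finally, the area formula for the a.e.\ approximately differentiable homeomorphism $\varphi$, together with injectivity (multiplicity at most one), gives the change-of-variables inequality $\int_\Omega |J(x,\varphi)|\,h(\varphi(x))\,dx\leq\int_{\widetilde{\Omega}}h(y)\,dy$; with $h=|\nabla f|^p$ this bounds the last factor by $\|f\|_{L^1_p(\widetilde{\Omega})}^p$. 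Taking $q$-th roots gives $\|\varphi^\ast(f)\|_{L^1_q(\Omega)}\leq K_{p,q}(\varphi;\Omega)\|f\|_{L^1_p(\widetilde{\Omega})}$.

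For \emph{necessity}, assume $\varphi^\ast$ is bounded with norm $C$. Regularity comes first: applying $\varphi^\ast$ to localized coordinate functions $y\mapsto y_i$ bounds $\|\nabla\varphi_i\|_{L_q(V)}$ on each $V\Subset\Omega$, so $\varphi\in W^1_{q,\loc}(\Omega)$. For the distortion bound (and simultaneously finite distortion) I would, for open $U\subseteq\Omega$, introduce the local operator seminorm
\[
\Phi(U)=\sup\left\{\frac{\|\nabla(f\circ\varphi)\|_{L_q(U)}}{\|f\|_{L^1_p(\widetilde{\Omega})}}:f\in L^1_p(\widetilde{\Omega})\setminus\{0\},\ \operatorname{supp}f\subset\varphi(U)\right\},
\]
so that $\Phi(\Omega)\leq C$. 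The structural point is that $U\mapsto\Phi(U)^{\kappa}$, with $\kappa=pq/(p-q)$, is a bounded monotone countably additive set function — the exponent $\kappa$ is exactly what makes local norms add up — hence a Borel measure of total mass $\leq C^{\kappa}$. Its volume derivative $w(x)=\lim_{r\to0}\Phi(B(x,r))^{\kappa}/|B(x,r)|$ then exists a.e.\ with $\int_\Omega w\,dx\leq C^{\kappa}$. A local computation at Lebesgue points of $D\varphi$ — testing with functions whose gradient is aligned with the top singular direction of $D\varphi(x)$ and supported near $\varphi(B(x,r))$ — identifies $w(x)=K_p(x)^{\kappa}$; the a.e.\ finiteness of $w$ simultaneously forces $D\varphi=0$ on $Z$ (finite distortion for $p<\infty$), and integrability gives $\|K_p\|_{L_\kappa(\Omega)}\leq C$. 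For $p=q$ ($\kappa=\infty$) the measure-differentiation step is replaced by the $\ess\sup$ version of the same local estimate.

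The main obstacle is this necessity direction, and within it the extraction of the pointwise identity $w(x)=K_p(x)^{\kappa}$: it requires constructing near-optimal local test functions adapted to the approximate differential $D\varphi(x)$ and controlling the error terms at Lebesgue points, and it is precisely here that finite distortion is needed to exclude degeneracy on the critical set. A secondary technical point, already present in sufficiency, is justifying the limit passage in the chain rule for non-Lipschitz $f$; this rests on quasicontinuity and on $\varphi^{-1}$ preserving the relevant capacity-null exceptional sets, which is what guarantees that the limit of $f_k\circ\varphi$ is genuinely $f\circ\varphi$.
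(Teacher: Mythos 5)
The paper does not prove this theorem but quotes it from \cite{VG75,U93,V88,GU10}, and your outline is essentially the proof given in those references: sufficiency by the chain rule, the pointwise bound $|D\varphi|\leq K_p|J|^{1/p}$, H\"older with exponents $\kappa/q$ and $p/q$, and the change-of-variables inequality of \cite{H93}; necessity by differentiating the local operator norm raised to the power $\kappa$ and identifying the volume derivative with $K_p^\kappa$. The one imprecision is that $U\mapsto\Phi(U)^{\kappa}$ is not a Borel measure but only a monotone, countably superadditive (quasiadditive) set function, so the a.e.\ existence of the volume derivative and the bound $\int_\Omega w\,dx\leq C^{\kappa}$ must be taken from the differentiation theorem for quasiadditive set functions developed in \cite{VU04,VU05}, not from Lebesgue differentiation of measures; with that substitution your argument is the standard one.
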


This theorem in the case $p=q=n$ was proved in the work \cite{VG75}. The general case $1\leq q\leq p<\infty$ was proved in \cite{U93} where the weak change of variables formula \cite{H93} was used (see, also the case $n<q=p<\infty$ in \cite{V88}). 

In the case \(p=q=n\), the mappings that generate bounded composition operators on Sobolev spaces coincide with quasiconformal mappings \cite{VG75}.
By this reasons homeomorphisms $\varphi:\Omega\to\widetilde{\Omega}$ satisfying conditions of Theorem~\ref{CompTh} are called weak $(p,q)$-quasiconformal mappings \cite{VU98}. In the case $p=q$ such mappings are called weak $p$-quasiconformal mappings \cite{GGR95}.

In the case of weak $(p,q)$-quasiconformal mappings, the following composition duality theorem was proved in \cite{U93} (see, also \cite{GU19}).

\begin{thm}
\label{CompThD} Let a homeomorphism $\varphi:\Omega\to\widetilde{\Omega}$
between two domains $\Omega$ and $\widetilde{\Omega}$ generate a bounded composition
operator 
\[
\varphi^{\ast}:L^1_p(\widetilde{\Omega})\to L^1_{q}(\Omega),\,\,\,n-1<q \leq p< \infty.
\]
Then the inverse mapping $\varphi^{-1}:\widetilde{\Omega}\to\Omega$ generates a bounded composition operator 
\[
\left(\varphi^{-1}\right)^{\ast}:L^1_{q'}(\Omega)\to L^1_{p'}(\widetilde{\Omega}),
\]
where $p'=p/(p-n+1)$, $q'=q/(q-n+1)$. 
\end{thm}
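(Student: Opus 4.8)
The plan is to verify, for the inverse homeomorphism $\varphi^{-1}$, the three conditions that Theorem~\ref{CompTh} requires in order to characterise a bounded composition operator $L^1_{q'}(\Omega)\to L^1_{p'}(\widetilde{\Omega})$: namely that $\varphi^{-1}\in W^1_{p',\loc}(\widetilde{\Omega})$, that $\varphi^{-1}$ has finite distortion, and that its $q'$-dilatation lies in $L_{\widetilde{\kappa}}(\widetilde{\Omega})$, where $1/p'-1/q'=1/\widetilde{\kappa}$. From the hypothesis and Theorem~\ref{CompTh} we already know that $\varphi\in W^1_{q,\loc}(\Omega)$ is a mapping of finite distortion with $\|K_p\|_{L_{\kappa}(\Omega)}=K_{p,q}(\varphi;\Omega)<\infty$, where $1/q-1/p=1/\kappa$. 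The whole argument is a bridge from the $p$-dilatation of $\varphi$ to the $q'$-dilatation of $\varphi^{-1}$, carried out by a pointwise analysis of the differential matrix and a single change of variables $y=\varphi(x)$.

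I would first settle the regularity of the inverse, which is where the restriction $n-1<q$ is used and which I expect to be the main obstacle. Since $\varphi$ is a homeomorphism of finite distortion in $W^1_{q,\loc}(\Omega)$ with $q>n-1$, the theory of Sobolev homeomorphisms guarantees that $\varphi$ enjoys the Luzin $N$- and $N^{-1}$-properties and that $\varphi^{-1}$ is itself weakly differentiable, with
\[
D\varphi^{-1}(y)=\bigl(D\varphi(\varphi^{-1}(y))\bigr)^{-1}
\]
for almost every $y\in\widetilde{\Omega}$, that $\varphi^{-1}$ has finite distortion, and that $J(y,\varphi^{-1})=J(x,\varphi)^{-1}$ at $y=\varphi(x)$ off the degeneracy set. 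Establishing this weak differentiability and the pointwise inverse-matrix formula is the delicate point: it rests on the sharp threshold $q>n-1$, below which the inverse of a Sobolev homeomorphism need not be weakly differentiable.

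With the inverse formula in hand, the core is a pointwise estimate obtained from singular values. Writing $\lambda_1\ge\dots\ge\lambda_n>0$ for the singular values of $D\varphi(x)$ at a point of differentiability, one has $|D\varphi(x)|=\lambda_1$, $J(x,\varphi)=\lambda_1\cdots\lambda_n$, and $|\adj D\varphi(x)|=\lambda_1\cdots\lambda_{n-1}\le|D\varphi(x)|^{n-1}$, so that
\[
|D\varphi^{-1}(\varphi(x))|=\frac{|\adj D\varphi(x)|}{J(x,\varphi)}\le\frac{|D\varphi(x)|^{n-1}}{J(x,\varphi)}.
\]
Combining this with $J(\varphi(x),\varphi^{-1})=J(x,\varphi)^{-1}$ and recalling $1/q'=1-(n-1)/q$, the $q'$-dilatation of $\varphi^{-1}$ satisfies
\[
K_{q'}(\varphi(x),\varphi^{-1})=\frac{|D\varphi^{-1}(\varphi(x))|}{|J(\varphi(x),\varphi^{-1})|^{1/q'}}\le|D\varphi(x)|^{\,n-1}\,J(x,\varphi)^{\frac{1}{q'}-1}.
\]
A direct check with $1/q'=1-(n-1)/q$, $1/p'=1-(n-1)/p$ gives $1/\widetilde{\kappa}=1/p'-1/q'=(n-1)(1/q-1/p)=(n-1)/\kappa$, hence $\widetilde{\kappa}=\kappa/(n-1)$, and therefore $(n-1)\widetilde{\kappa}=\kappa$ together with $(1/q'-1)\widetilde{\kappa}+1=-\kappa/p$ (the latter being equivalent to the defining relation $\kappa(1/q-1/p)=1$).

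These exponent identities convert the pointwise bound into the clean inequality $K_{q'}(\varphi(x),\varphi^{-1})^{\widetilde{\kappa}}\,J(x,\varphi)\le K_p(x,\varphi)^{\kappa}$, valid almost everywhere. Integrating over $\widetilde{\Omega}$ and changing variables $y=\varphi(x)$ (legitimate by the Luzin properties above) yields
\[
\int_{\widetilde{\Omega}}K_{q'}(y,\varphi^{-1})^{\widetilde{\kappa}}\,dy=\int_{\Omega}K_{q'}(\varphi(x),\varphi^{-1})^{\widetilde{\kappa}}\,J(x,\varphi)\,dx\le\int_{\Omega}K_p(x,\varphi)^{\kappa}\,dx=K_{p,q}(\varphi;\Omega)^{\kappa}<\infty,
\]
so that $\|K_{q'}(\varphi^{-1})\|_{L_{\widetilde{\kappa}}(\widetilde{\Omega})}\le K_{p,q}(\varphi;\Omega)^{\,n-1}<\infty$. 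The remaining membership $\varphi^{-1}\in W^1_{p',\loc}(\widetilde{\Omega})$ then follows by writing $|D\varphi^{-1}|^{p'}=K_{q'}(\cdot,\varphi^{-1})^{p'}|J(\cdot,\varphi^{-1})|^{p'/q'}$ and applying H\"older's inequality with exponents $\widetilde{\kappa}/p'$ and its conjugate, the Jacobian factor being locally integrable since $\int_{K}|J(y,\varphi^{-1})|\,dy\le|\varphi^{-1}(K)|<\infty$ for compact $K$. Feeding the three verified conditions into Theorem~\ref{CompTh}, now read in the direction $L^1_{q'}(\Omega)\to L^1_{p'}(\widetilde{\Omega})$, shows that $\varphi^{-1}$ generates a bounded composition operator, which completes the proof.
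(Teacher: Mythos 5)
The paper does not actually prove Theorem~\ref{CompThD}; it is quoted as a known result with a pointer to \cite{U93} and \cite{GU19}. Your argument is, in substance, a faithful reconstruction of the proof in those references: reduce everything to the characterization in Theorem~\ref{CompTh}, pass to the inverse differential via $|D\varphi^{-1}(\varphi(x))|=|\adj D\varphi(x)|/|J(x,\varphi)|\leq |D\varphi(x)|^{n-1}/|J(x,\varphi)|$ and $J(\varphi(x),\varphi^{-1})=J(x,\varphi)^{-1}$, check the exponent bookkeeping $\widetilde{\kappa}=\kappa/(n-1)$, and integrate after the change of variables $y=\varphi(x)$; your computation of the exponents and the resulting bound $\|K_{q'}(\varphi^{-1})\|_{L_{\widetilde{\kappa}}(\widetilde{\Omega})}\leq K_{p,q}(\varphi;\Omega)^{n-1}$ are correct.

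The one place where you overstate is the blanket claim that $\varphi$ ``enjoys the Luzin $N$- and $N^{-1}$-properties'' because $q>n-1$: for $n-1<q<n$ a Sobolev homeomorphism (even of finite distortion with integrable dilatation) need not satisfy the Luzin $N$ condition, so the change of variables cannot be justified that way. This is repairable, and is exactly how the cited proofs proceed: one uses Haj\l asz's weak change of variables formula \cite{H93}, which holds for any $W^1_{1,\loc}$ homeomorphism up to an exceptional null set $\Sigma$; on $\varphi(\Sigma)$ the inverse has vanishing Jacobian almost everywhere, hence (by finite distortion of $\varphi^{-1}$) vanishing differential and zero dilatation, so that set contributes nothing to the integral of $K_{q'}(\cdot,\varphi^{-1})^{\widetilde{\kappa}}$. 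Likewise, the weak differentiability and finite distortion of $\varphi^{-1}$ under the threshold $q>n-1$, which you correctly identify as the crux, is a genuine theorem (regularity of inverses of Sobolev homeomorphisms) that should be cited rather than asserted. With those two citations supplied, the proof is complete and matches the standard one.
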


\subsection{On $(p,q)$-composition reflections}

In this subsection we introduce the new notion of a $(p,q)$-composition reflection and prove that they are useful tools for constructions of bounded extension operators.

Let $\widetilde{\Omega} \subset \Omega \subset \mathbb{R}^n$ be Euclidean domains.  
Suppose there exists a homeomorphism  
\[
\varphi : \Omega \setminus \overline{\widetilde{\Omega}} \to \widetilde{\Omega}
\]
which is a weak $(p,q)$-quasiconformal mapping and admits a continuous extension to $\Omega \cap \partial \widetilde{\Omega}$ such that
\[
\varphi|_{\Omega \cap \partial \widetilde{\Omega}} = \mathrm{id}.
\]
Then $\varphi$ is called a \emph{$(p,q)$-composition reflection} of $\Omega \setminus \overline{\widetilde{\Omega}}$ onto $\widetilde{\Omega}$.

The following theorem provides the basic extension result via $(p,q)$-composition reflections.

\begin{thm}
\label{ext_pq}
Let $\widetilde{\Omega} \subset \Omega \subset \mathbb{R}^n$ be Euclidean domains such that 
$\Omega \cap \partial \widetilde{\Omega}$ is a Lipschitz surface. 
Suppose there exists a $(p,q)$-composition reflection 
$\varphi$ of $\Omega \setminus \overline{\widetilde{\Omega}}$ onto $\widetilde{\Omega}$, 
for some $1 < q \leq p < \infty$. 
Then there exists a bounded $(p,q)$-extension operator
\[
E: L^1_p(\widetilde{\Omega}) \to L^1_q(\Omega)
\]
satisfying
\begin{equation*}
\|E\|\leq \left(|\widetilde{\Omega}|^{\frac{p-q}{p}}+K_{p,q}^q(\varphi;\Omega\setminus\overline{\widetilde{\Omega}})\right)^{\frac{1}{q}}.
\end{equation*}
\end{thm}

\begin{proof}
Let $f \in L^1_p(\widetilde{\Omega})$ and define the composition operator $\varphi^{*} f := f \circ \varphi$. 
By the assumption, $\varphi : \Omega \setminus \overline{\widetilde{\Omega}} \to \widetilde{\Omega}$ is a $(p,q)$-composition reflection with 
$\varphi|_{\Gamma} = \mathrm{id}$ on $\Gamma := \Omega \cap \partial \widetilde{\Omega}$.
In particular,
\[
\varphi^{*}f \in L^1_q(\Omega \setminus \overline{\widetilde{\Omega}}),
\qquad
\text{and the traces } f|_{\Gamma} = (\varphi^{*}f)|_{\Gamma} \text{ coincide.}
\]

Define $\widetilde{f}$ on $\Omega$ by
\[
\widetilde{f}(x) :=
\begin{cases}
f(x), & x \in \widetilde{\Omega},\\
(\varphi^{*}f)(x), & x \in \Omega \setminus \overline{\widetilde{\Omega}}.
\end{cases}
\]

Since $\Gamma$ is a Lipschitz surface, the Sobolev trace operators are well-defined, and the above trace equality implies, by the standard gluing lemma \cite{evans2015measure}, that $\widetilde{f} \in L^1_q(\Omega)$ with
\[
\nabla \widetilde{f} =
\begin{cases}
\nabla f, & \text{a.e. on } \widetilde{\Omega},\\
\nabla (f \circ \varphi), & \text{a.e. on } \Omega \setminus \overline{\widetilde{\Omega}}.
\end{cases}
\]

Next we give estimates of the norm of this extension operator. Let $q<p$, then
\begin{multline*}
\|E(f)\|_{L^1_q(\Omega)}=\left(\|E(f)\|^q_{L^1_q(\Omega)}\right)^{\frac{1}{q}}=\left(\|E(f)\|^q_{L^1_q(\widetilde{\Omega})}+\|E(f)\|^q_{L^1_q(\Omega\setminus \overline{\widetilde{\Omega}})}\right)^{\frac{1}{q}}\\
\leq 
\left(|\widetilde{\Omega}|^{\frac{p-q}{p}}\|f\|^q_{L^1_p(\widetilde{\Omega})}+K_{p,q}^q(\varphi;\Omega\setminus\overline{\widetilde{\Omega}})\|f\|^q_{L^1_p(\widetilde{\Omega})}\right)^{\frac{1}{q}}
\\=
\left(|\widetilde{\Omega}|^{\frac{p-q}{p}}+K_{p,q}^q(\varphi;\Omega\setminus\overline{\widetilde{\Omega}})\right)^{\frac{1}{q}}\|f\|_{L^1_p(\widetilde{\Omega})}.
\end{multline*}
If $q=p$, then
\begin{multline*}
\|E(f)\|_{L^1_p(\Omega)}=\left(\|E(f)\|^p_{L^1_p(\Omega)}\right)^{\frac{1}{p}}=\left(\|E(f)\|^p_{L^1_p(\widetilde{\Omega})}+\|E(f)\|^p_{L^1_p(\Omega\setminus \overline{\widetilde{\Omega}})}\right)^{\frac{1}{p}}\\
\leq 
\left(\|f\|^p_{L^1_p(\widetilde{\Omega})}+K_{p,p}^p(\varphi;\Omega\setminus\overline{\widetilde{\Omega}})\|f\|^p_{L^1_p(\widetilde{\Omega})}\right)^{\frac{1}{p}}
=
\left(1+K_{p,p}^p(\varphi;\Omega\setminus\overline{\widetilde{\Omega}})\right)^{\frac{1}{p}}\|f\|_{L^1_p(\widetilde{\Omega})}.
\end{multline*}
\end{proof}

By using the composition duality theorem \cite{U93} we have the following corollary of the previous theorem.

\begin{cor}
Let $\widetilde{\Omega}\subset\Omega$ be Euclidean domains in $\mathbb R^n$ such that $\Omega \cap \partial \tilde{\Omega}$ is a Lipschitz surface in $\mathbb R^n$. Suppose that there exists a $(p,q)$-composition reflection $\varphi$ of $\Omega\setminus \overline{\widetilde{\Omega}}$ onto $\widetilde{\Omega}$, $n-1<q\leq p<\infty$. Then there exists a bounded $(q',p')$-extension operator
$$
E: L^1_{q'}(\Omega\setminus \overline{\widetilde{\Omega}}) \to L^1_{p'}(\Omega),\,\,p'=\frac{p}{p-n+1}\leq q'=\frac{q}{q-n+1}.
$$
\end{cor}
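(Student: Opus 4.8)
The plan is to derive the corollary from the composition duality theorem (Theorem~\ref{CompThD}) and then to rerun, with the two complementary pieces of $\Omega$ interchanged, the gluing construction already carried out in the proof of Theorem~\ref{ext_pq}. The point is that the decomposition of $\Omega$ into the two open sets $\widetilde{\Omega}$ and $\Omega\setminus\overline{\widetilde{\Omega}}$ separated by the Lipschitz surface $\Omega\cap\partial\widetilde{\Omega}$ is symmetric, and the reflection that fixes this surface can be used to extend from either side.

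First I would record what the hypothesis gives: the $(p,q)$-composition reflection $\varphi\colon\Omega\setminus\overline{\widetilde{\Omega}}\to\widetilde{\Omega}$ is a weak $(p,q)$-quasiconformal homeomorphism, hence it generates the bounded composition operator $\varphi^{\ast}\colon L^1_p(\widetilde{\Omega})\to L^1_q(\Omega\setminus\overline{\widetilde{\Omega}})$. Since $n-1<q\leq p<\infty$, Theorem~\ref{CompThD} applies and yields that the inverse homeomorphism $\varphi^{-1}\colon\widetilde{\Omega}\to\Omega\setminus\overline{\widetilde{\Omega}}$ generates a bounded composition operator
\[
(\varphi^{-1})^{\ast}\colon L^1_{q'}(\Omega\setminus\overline{\widetilde{\Omega}})\to L^1_{p'}(\widetilde{\Omega}),\qquad p'=\frac{p}{p-n+1},\quad q'=\frac{q}{q-n+1}.
\]
Because $t\mapsto t/(t-n+1)$ is decreasing on $(n-1,\infty)$, the inequality $q\leq p$ gives $p'\leq q'$, so the exponents are again in the admissible order; moreover $\varphi^{-1}$ inherits the normalisation $\varphi^{-1}\big|_{\Omega\cap\partial\widetilde{\Omega}}=\mathrm{id}$ from $\varphi$.

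Next, for $h\in L^1_{q'}(\Omega\setminus\overline{\widetilde{\Omega}})$ I would define the candidate extension exactly as before, but with the two pieces swapped and $\varphi^{-1}$ in the role of the reflection:
\[
E(h)=
\begin{cases}
h & \text{on } \Omega\setminus\overline{\widetilde{\Omega}},\\
(\varphi^{-1})^{\ast}(h) & \text{on } \widetilde{\Omega}.
\end{cases}
\]
By the duality step the inner piece satisfies $(\varphi^{-1})^{\ast}(h)\in L^1_{p'}(\widetilde{\Omega})$, while the outer piece $h$ lies in $L^1_{q'}(\Omega\setminus\overline{\widetilde{\Omega}})\subset L^1_{p'}(\Omega\setminus\overline{\widetilde{\Omega}})$ via $p'\leq q'$, the finiteness of $|\Omega\setminus\overline{\widetilde{\Omega}}|$ in the bounded setting under consideration, and H\"older's inequality. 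Since $\varphi^{-1}=\mathrm{id}$ on the Lipschitz surface $\Omega\cap\partial\widetilde{\Omega}$, the two definitions agree there, so gluing across this measure-zero set by the $\mathrm{ACL}$-property produces a single function $E(h)\in L^1_{p'}(\Omega)$ with $E(h)\big|_{\Omega\setminus\overline{\widetilde{\Omega}}}=h$. Linearity is immediate, and boundedness follows by summing the $L_{p'}$-norms of the two contributions precisely as in the displayed estimates of Theorem~\ref{ext_pq}.

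The only genuine input is the composition duality theorem, and it is exactly there that the stronger restriction $n-1<q\leq p<\infty$ (in place of $1<q\leq p<\infty$) is forced; everything afterwards is the symmetric counterpart of the already-established gluing argument. The one routine point that still needs a word of care is the embedding $L^1_{q'}\subset L^1_{p'}$ on the outer piece $\Omega\setminus\overline{\widetilde{\Omega}}$, which requires that piece to have finite measure — automatic for the bounded domains treated in this paper.
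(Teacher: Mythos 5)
Your argument is correct and is exactly the one the paper intends: the corollary is stated there with only the remark ``by using the composition duality theorem,'' and your proof fills this in by applying Theorem~\ref{CompThD} to obtain the bounded operator $(\varphi^{-1})^{\ast}\colon L^1_{q'}(\Omega\setminus\overline{\widetilde{\Omega}})\to L^1_{p'}(\widetilde{\Omega})$, so that $\varphi^{-1}$ is a $(q',p')$-composition reflection with the roles of the two pieces interchanged, and then rerunning the gluing of Theorem~\ref{ext_pq}. Your side remark that the H\"older step on the outer piece needs $|\Omega\setminus\overline{\widetilde{\Omega}}|<\infty$ is a legitimate (implicit) hypothesis, just as Theorem~\ref{ext_pq} implicitly uses $|\widetilde{\Omega}|<\infty$.
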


Recall the notion of uniform domains \cite{GO79} or domains with the flexible cone condition in another terminology \cite{B88}. Let $\Omega\subset\mathbb R^n$ be a domain. Then $\Omega$ is called a uniform domain if there exist a number $\varepsilon>0$  such that each
pair of points $x_1,x_2\in\Omega$ can be joined by a rectifiable arc $\gamma\in\Omega$ for which
$$
\begin{cases}
s(\gamma)\leq \varepsilon |x_1-x_2|,\\
\min\limits_{k=1,2}s(\gamma(x_k,x))\leq \varepsilon d(x,\partial \Omega)\,\,\text{for all}\,\,x\in\gamma.
\end{cases}
$$
In this notation $s(\gamma)$ denotes the Euclidean length of $\gamma$, $\gamma(x_k,x)$ is the part of $\gamma$ between $x_k$ and $x$,
and $d(x,\partial\Omega)$ denotes the Euclidean distance from $x$ to $\partial\Omega$.

\begin{cor}
Let $\widetilde{\Omega}\subset\Omega$ be Euclidean domains in $\mathbb R^n$ such that $\Omega \cap \partial \tilde{\Omega}$ is a Lipschitz surface in $\mathbb R^n$ and $\Omega$ is a uniform domain. Suppose that there exists a $(p,q)$-composition reflection $\varphi$ of $\Omega\setminus \overline{\widetilde{\Omega}}$ onto $\widetilde{\Omega}$, $1<q\leq p<\infty$. Then there exists a bounded $(p,q)$-extension operator
$$
E: L^1_p(\widetilde{\Omega}) \to L^1_{q}(\mathbb R^n).
$$
\end{cor}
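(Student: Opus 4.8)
The plan is to factor the desired extension through the intermediate space $L^1_q(\Omega)$, combining Theorem~\ref{ext_pq} with Jones' extension theorem for uniform domains. First I would apply Theorem~\ref{ext_pq}: since $\Omega\cap\partial\widetilde{\Omega}$ is a Lipschitz surface and a $(p,q)$-composition reflection $\varphi$ of $\Omega\setminus\overline{\widetilde{\Omega}}$ onto $\widetilde{\Omega}$ is assumed to exist, there is a bounded $(p,q)$-extension operator
$$
E_0: L^1_p(\widetilde{\Omega}) \to L^1_q(\Omega), \qquad E_0(f)\big|_{\widetilde{\Omega}} = f,
$$
whose norm is controlled by the estimate \eqref{ext_norm}.

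The second ingredient is the hypothesis that $\Omega$ is a uniform domain. By the extension theorem of P.~Jones~\cite{J81}, every uniform domain is a $(q,q)$-extension domain for $1<q<\infty$, so there exists a bounded extension operator
$$
E_1: L^1_q(\Omega) \to L^1_q(\mathbb{R}^n), \qquad E_1(g)\big|_{\Omega} = g .
$$
I would then set $E := E_1\circ E_0$. Boundedness is automatic, with $\|E\|\leq \|E_1\|\,\|E_0\|$, and the extension property follows from the nesting $\widetilde{\Omega}\subset\Omega$: for $f\in L^1_p(\widetilde{\Omega})$,
$$
E(f)\big|_{\widetilde{\Omega}} = \left(E_1(E_0 f)\big|_{\Omega}\right)\big|_{\widetilde{\Omega}} = E_0(f)\big|_{\widetilde{\Omega}} = f .
$$
Thus $E$ is the required bounded $(p,q)$-extension operator into $L^1_q(\mathbb{R}^n)$.

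Because both building blocks are already available --- one from Theorem~\ref{ext_pq}, the other from Jones' theorem --- no genuinely new estimate is needed, and the argument is essentially a composition. The only points deserving attention are that Jones' operator acts on the homogeneous seminormed space $L^1_q$ rather than merely on $W^1_q$, and that the exponent $q$ lies in the admissible range $1<q<\infty$; both are unproblematic. Accordingly, the main (and rather mild) obstacle is the bookkeeping of the restriction maps, namely checking that restricting $E_1(E_0 f)$ first to $\Omega$ and then to $\widetilde{\Omega}$ recovers $f$ quasi-everywhere, which is immediate from the two extension identities above.
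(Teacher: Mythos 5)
Your proposal is correct and coincides with the paper's own argument: both apply Theorem~\ref{ext_pq} to obtain the bounded $(p,q)$-extension operator into $L^1_q(\Omega)$ and then compose with Jones' $(q,q)$-extension operator for the uniform domain $\Omega$. No further comment is needed.
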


\begin{proof}
Because there exists a $(p,q)$-composition reflection $\varphi$ of $\Omega\setminus \overline{\widetilde{\Omega}}$ onto $\widetilde{\Omega}$, then by Theorem~\ref{ext_pq} there exists a bounded $(p,q)$-extension operator
$$
E_{\widetilde{\Omega}}: L^1_p(\widetilde{\Omega}) \to L^1_{q}(\Omega).
$$
Now, since $\Omega$ is a uniform domain, by \cite{J81} there exists a bounded $(q,q)$-extension operator
$$
E_{\Omega}: L^1_q(\Omega) \to L^1_{q}(\mathbb R^n).
$$
Hence, we obtain the bounded extension operator $E$ as a composition of extension operators $E_{\Omega}\circ E_{\widetilde{\Omega}}$.
\end{proof}

\section{Extension operators in outward cuspidal domains}  

In this section we prove the existence of $(p,q)$-extension operators in outward cuspidal domains, refining the results of \cite{GS82} by using the theory of composition operators on Sobolev spaces \cite{U93}. We first consider the two-dimensional case, and subsequently, on this base, we present the multidimensional case.

\subsection{The two-dimensional case}  
Let us recall that the two-dimensional outward $\widetilde{\gamma}$-cuspidal domain 
$\widetilde{\Omega}_{\widetilde{\gamma}} \subset \mathbb{R}^2$, with $1<\widetilde{\gamma}<\infty$, is defined as 
\begin{equation}
\label{cusp_two}
\widetilde{\Omega}_{\widetilde{\gamma}} := \widetilde{\Omega}_{+} \,\cup\, \I \,\cup\, \widetilde{\Omega}_{-},
\end{equation}
where, in polar coordinates,
$$
\widetilde{\Omega}_{+} = \{(r,\theta): 0<r<1,\ 0<\theta< r^{\gamma}\}, \quad \text{with} \quad \gamma=\widetilde{\gamma}-1,
$$
and
$$
\widetilde{\Omega}_{-} = H(\widetilde{\Omega}_{+}), \quad \text{where} \quad H(x,y)=(x,-y), \quad \I := \{(x,0): 0<x<1\}.
$$

\begin{thm}
Let $\widetilde{\Omega}_{\widetilde{\gamma}} \subset \mathbb{D}$, where $\mathbb{D} \subset \mathbb{R}^n$ is the unit ball, be an outward $\widetilde{\gamma}$-cuspidal domain with $1<\widetilde{\gamma}<\infty$. Set $\gamma = \widetilde{\gamma} - 1$. Then, for any $1 < p < \infty$, the extension operator
\[
E : L^1_p(\widetilde{\Omega}_{\widetilde{\gamma}}) \to L^1_{q}(\mathbb{D})
\]
is bounded for all $1 \le q < \frac{2 p}{\gamma + 2}$, and satisfies
\[
\|E\| \le \left( \pi^{1-\alpha} + C_{\gamma}^{\alpha} \left[ \frac{1-\alpha}{\,2(1-\alpha) - \alpha\gamma\,} \right]^{1-\alpha} \right)^{\frac{1}{q}},
\]
with $\frac{1}{p} \le \alpha < \frac{2}{\gamma + 2}$ arbitrary and
$$
C_{\gamma}=\pi  \left(1+(\gamma+1)^2\frac{\pi^4}{(\pi-1)^4}+\frac{1}{(\pi-1)^2}\right)^{\frac{p}{2}}.
$$
\end{thm}

\begin{proof}
Define the natural reflection 
$$
\varphi :\mathbb D\setminus \overline{\widetilde{\Omega}} \to \widetilde{\Omega},
$$ 
as an inversion by $\theta$  on the circles $S(0,r)$, $0<r<1$. We write explicitly this inversion for the upper half disc. Because symmetry of $\theta$ all necessary calculation  on the upper half disc are relevant for the lower half disc also.

By using calculations of \cite{GS82} and the substitution $s=\theta r$, this reflection is the following
$$
\varphi(r,s)=(R(r,s),S(r,s))=\left(r, r^{\gamma}\cdot \frac{\pi r-s}{\pi -r^{\gamma}}\right).
$$
The coordinate system $(r,s)$ is an infinitesimally orthonormal coordinate system. It means that the value of the norm of the differential $D\varphi(r,s)$ and the value of the Jacobian  $J(\varphi,(r,s))$ coincide with their values in the Euclidean coordinates.
The partial derivatives:
\begin{multline*}
\frac{\partial R(r,s)}{\partial r}=1,\,\,\frac{\partial R(r,s)}{\partial s}=0,\\
\frac{\partial S(r,s)}{\partial r}=\gamma r^{\gamma-1}\cdot \frac{\pi r-s}{\pi -r^{\gamma}}+
r^{\gamma}\left(\frac{\pi(\pi-r^{\gamma})+\gamma r^{\gamma-1}(\pi r-s)}{(\pi-r^{\gamma})^2}\right),
\\
\frac{\partial S(r,s)}{\partial s}=-\frac{r^{\gamma}}{\pi-r^{\gamma}}.
\end{multline*}
Then
\begin{multline*}
\left|\frac{\partial R(r,s)}{\partial r}\right|=1,\,\,\left|\frac{\partial R(r,s)}{\partial s}\right|=0,\\
\left|\frac{\partial S(r,s)}{\partial r}\right|\leq
\frac{\pi\gamma}{\pi-1}+\frac{\pi^2+\pi\gamma}{(\pi-1)^2}=(\gamma+1)\frac{\pi^2}{(\pi-1)^2},\\
\left|\frac{\partial S(r,s)}{\partial s}\right|\leq \frac{1}{\pi-1}.
\end{multline*}
Hence, we have the following estimate of the differential $D\varphi$
$$
1<|D\varphi (s,r)|<\left(1+(\gamma+1)^2\frac{\pi^4}{(\pi-1)^4}+\frac{1}{(\pi-1)^2}\right)^{\frac{1}{2}}<\infty.
$$

Introduce a correction factor $\alpha:=q/p<1$. Then $\varphi :\mathbb D\setminus \overline{\widetilde{\Omega}} \to \widetilde{\Omega}$ be a $(p,q)$-composition reflection, if
$$
K_{p,q}^{\frac{p\alpha}{1-\alpha}}(\varphi)=\int\limits_{\mathbb D \setminus \overline{\widetilde{\Omega}}}
 \left(\frac{|D \varphi (s,r)|^p}{|J(\varphi, (s,r))|}\right)^{\frac{\alpha}{1-\alpha}}~drds<\infty.
$$  
Convergence of the integral at the the origin of coordinates depends only on 
$$
|J(\varphi, (s,r))|=\frac{\partial R(r,s)}{\partial r}\frac{\partial S(r,s)}{\partial s}=\frac{r^{\gamma}}{\pi-r^{\gamma}}\geq \frac{r^{\gamma}}{\pi}.
$$
Hence, combining these estimates we obtain 
\begin{multline*}
K_{p,q}^{q}(\varphi;\mathbb D \setminus \overline{\widetilde{\Omega}})
=
K_{p,q}^{\alpha p}(\varphi;\mathbb D \setminus \overline{\widetilde{\Omega}})=\left(\int\limits_{\mathbb D \setminus \overline{\widetilde{\Omega}}}
 \left(\frac{|D \varphi (s,r)|^p}{|J(\varphi, (s,r))|}\right)^{\frac{\alpha}{1-\alpha}}~drds\right)^{1-\alpha}\\
\leq
\left(\int\limits_{\mathbb D} \left(\frac{|D \varphi (s,r)|^p}{|J(\varphi, (s,r))|}\right)^{\frac{\alpha}{1-\alpha}}~drds\right)^{1-\alpha}
\leq 
\left(\int\limits_{\mathbb D} \left(\frac{C_{\gamma}}{r^{\gamma}} \right)^{\frac{\alpha}{1-\alpha}}~drds\right)^{1-\alpha}
\\=
\left(\int_0^1  \left(\frac{C_{\gamma}}{r^{\gamma}} \right) ^{\frac{\alpha}{1-\alpha}}~dr\int\limits_0^{2\pi r}~ds\right)^{1-\alpha}
=(2\pi)^{1-\alpha}C_{\gamma}^{\alpha}\left(\int_0^1  \frac{r}{r^{\frac{\alpha\gamma}{1-\alpha}}} ~dr\right)^{1-\alpha},
\end{multline*} 
where 
$$
C_{\gamma}=\pi  \left(1+(\gamma+1)^2\frac{\pi^4}{(\pi-1)^4}+\frac{1}{(\pi-1)^2}\right)^{\frac{p}{2}}.
$$

This integral converges if 
$$
\frac{\alpha \gamma}{1-\alpha}-1<1,
$$ 
i.e. for any $\alpha<\frac{2}{\gamma+2}$.
Because $1 \leq \alpha p$ we proved that $\varphi$ is the $(p, \alpha p)$-composition reflection for any 
$$
\frac{1}{p} \leq \alpha < \frac{2}{\gamma+2}.
$$

Hence, by Theorem~\ref{ext_pq} there exists a bounded $(p,q)$-extension operator
$$
E: L^1_p(\widetilde{\Omega}) \to L^1_{q}(\mathbb D), \,\,1\leq q<\frac{2}{\gamma+2}p,
$$
with 
$$
\|E\|\leq \left(\pi^{1-\alpha}+C_{\gamma}^{\alpha}\left(\frac{1-\alpha}{2(1-\alpha) -\alpha\gamma}\right)^{1-\alpha}\right)^{\frac{1}{q}}.
$$

\end{proof}

\begin{rem}
In the terminology of the works \cite{KZ22,KZ24} the considered domain is the domains with the $\frac{1}{\widetilde{\gamma}}$-H\"older cusp and the so these extension operators correspond to extension operators constructed in \cite{KZ22,KZ24,MP86,MP87}. The suggested method allows to obtain estimates of the operator norms, that is critical for applications of extension operators in the spectral estimates of elliptic operators \cite{GPU20}.
\end{rem}

\subsection{The multidimensional case. Picks.}

Let us define an outward $\widetilde{\gamma}$-cuspidal domain $\widetilde{\Omega}_{\widetilde{\gamma}}\subset\mathbb R^n$ 
as the domain obtained by the rotation of the two-dimensional domain
$$
\widetilde{\Omega}_+ := \{(r,\theta_1): 0< \theta_1 < r^{\gamma}, \ 0<r<1\},\quad \gamma=\widetilde{\gamma}-1,
$$
around the axis $x_1$, where $(r, \theta_1, \theta_2, \ldots, \theta_{n-1})$ are spherical coordinates.

\begin{thm}
Let $\widetilde{\Omega}_{\widetilde{\gamma}} \subset \mathbb{B}^n$, where $\mathbb{B}^n \subset \mathbb{R}^n$ is the unit ball, be an outward $\widetilde{\gamma}$-cuspidal domain with $1<\widetilde{\gamma}<\infty$. Set $\gamma = \widetilde{\gamma} - 1$. Then, for any $\frac{n+(n-1)\gamma}{n} < p < \infty$, the extension operator
\[
E : L^1_p(\widetilde{\Omega}_{\widetilde{\gamma}}) \to L^1_{q}(\mathbb{B}^n)
\]
is bounded for all $1 \le q < \frac{n p}{n + (n-1)\gamma}$, and satisfies
\[
\|E\| \le \left( |\mathbb{B}^n|^{1-\alpha} + |\mathbb{S}^{n-2}|^{\,1-\alpha} \,C_{\gamma}^{\alpha} \left[ \frac{1-\alpha}{\,n(1-\alpha) - \alpha (n-1)\gamma\,} \right]^{1-\alpha} \right)^{\frac{1}{q}},
\]
with $\frac{1}{p} \le \alpha < \frac{n}{n + (n-1)\gamma}$ arbitrary and
$$
C_{\gamma} = \pi \left(1 + (\gamma+1)^2 \frac{\pi^4}{(\pi-1)^4} + \frac{1}{(\pi-1)^2}\right)^{\frac{p}{2}}.
$$
\end{thm}

\begin{proof} We will use the spherical coordinates
$(r, \theta_1, \theta_2, \ldots, \theta_{n-1})$. The corresponding reflection in the upper half-space $x_n>0$ is similar to the case $n=2$:
$$
\varphi_n(r,s,\theta_2, \ldots, \theta_{n-1}) = \left(r, r^{\gamma} \cdot \frac{\pi r - s}{\pi - r^{\gamma}}, \theta_2, \ldots, \theta_{n-1}\right),
$$
where $s = \theta_1 r$. 

Using the reflection $H(x_1, \ldots, x_n) = (x_1, \ldots, -x_n)$ we obtain, as in the two-dimensional case, a natural reflection 
$$
\varphi_n : \mathbb{B}^n \setminus \overline{\widetilde{\Omega}}_{\widetilde{\gamma}} \to \widetilde{\Omega}_{\widetilde{\gamma}}.
$$
Using the previous two-dimensional calculations we obtain
\begin{multline*}
K_{p,q}^{q}(\varphi_n; \mathbb{B}^n \setminus \overline{\widetilde{\Omega}}_{\widetilde{\gamma}}) 
= K_{p,q}^{\alpha p}(\varphi_n; \mathbb{B}^n \setminus \overline{\widetilde{\Omega}}_{\widetilde{\gamma}}) \\
=\left(\int\limits_{\mathbb{B}^n \setminus \overline{\widetilde{\Omega}}_{\widetilde{\gamma}}}
\left(\frac{|D \varphi_n (s,r,\theta_2,\ldots, \theta_{n-1})|^p}{|J(\varphi_n, (s,r,\theta_2,\ldots, \theta_{n-1}))|}\right)^{\frac{\alpha}{1-\alpha}}
\,dr\,ds\,d\theta_2 \ldots d\theta_{n-1}\right)^{1-\alpha} \\
\leq |\mathbb{S}^{n-2}|^{\,1-\alpha} \, C_{\gamma}^{\alpha} \left(\int_0^1  \frac{r^{n-1}}{r^{\frac{\alpha (n-1)\gamma}{1-\alpha}}} \,dr\right)^{1-\alpha},
\end{multline*} 
where $|\mathbb{S}^{n-2}|$ is the $(n-2)$-dimensional surface measure of the unit sphere.

The difference with the two-dimensional case is the power $n-1$ in the exponent of $\gamma$, corresponding to the change from Euclidean to spherical coordinates and the $(n-1)$ tangential directions. This integral converges if 
$$
\frac{\alpha (n-1)\gamma}{1-\alpha} - (n-1) < 1 \quad \Longleftrightarrow \quad \alpha < \frac{n}{n + (n-1)\gamma}.
$$

Because $1 \leq \alpha p$, we have proved that $\varphi_n$ is a $(p, \alpha p)$-composition reflection for any 
$$
\frac{1}{p} \leq \alpha < \frac{n}{n + (n-1)\gamma}.
$$

Hence, by Theorem~\ref{ext_pq} there exists a bounded $(p,q)$-extension operator
$$
E: L^1_p(\widetilde{\Omega}_{\widetilde{\gamma}}) \to L^1_{q}(\mathbb{B}^n), \quad 1\leq q < \frac{n p}{n + (n-1)\gamma},
$$
with 
$$
\|E\| \leq \left(|\mathbb{B}^n|^{1-\alpha} + |\mathbb{S}^{n-2}|^{\,1-\alpha} \,C_{\gamma}^{\alpha} \left(\frac{1-\alpha}{n(1-\alpha) - \alpha (n-1)\gamma}\right)^{1-\alpha}\right)^{\frac{1}{q}}.
$$
\end{proof}

\subsection{The multidimensional case. Ridges.}

Let $\Omega_{n,r} = \mathbb{B}^{\,n-1} \times (0,1)$, where $\mathbb{B}^{\,n-1} \subset \mathbb{R}^{\,n-1}$ is the unit ball, and 
$\widetilde{\Omega}_{n,r} = \widetilde{\Omega}_{n-1} \times (0,1)$, where $\widetilde{\Omega}_{n-1}$ is the $(n-1)$-dimensional outward cuspidal domain from the previous theorem. 
The calculations are similar to the previous case and correspond to the case with $\mathbb{B}^{\,n-1}$.

Hence, there exists a bounded $(p,q)$-extension operator
$$
E: L^1_p(\widetilde{\Omega}_{n,r}) \to L^1_{q}(\Omega_{n,r}), 
\qquad 
1 \le q < \frac{(n-1)p}{(n-1) + (n-2)\gamma},
$$
for any 
$$
\frac{1}{p} \le \alpha < \frac{n-1}{(n-1) + (n-2)\gamma},
$$
with 
$$
\|E\| \le 
\left(
|\Omega_{n,r}|^{1-\alpha} 
+ 
|\mathbb{S}^{\,n-3}|^{\,1-\alpha} 
\, C_{\gamma}^{\alpha} 
\left(
\frac{1-\alpha}{(n-1)(1-\alpha) - \alpha (n-2)\gamma}
\right)^{1-\alpha}
\right)^{\frac{1}{q}},
$$
where $|\mathbb{S}^{\,n-3}|$ is the $(n-3)$-dimensional surface measure of the unit sphere in $\mathbb{R}^{\,n-2}$.

\section{Existence of $(p,q)$-composition reflections}  

By using the composition duality property \cite{U93} the Liouville type theorems were obtained for weak $(p,q)$-quasi\-con\-for\-mal mappings in \cite{GU19}. The following  Liouville type theorem partially explains the nonexistence part of Theorem~1.2 and Theorem~1.3 from \cite{KZ22}. Denote by $\overline{\mathbb R^n}$ the one-point compactification of the Euclidean space $\mathbb R^n$.

\begin{thm}
\label{CompRef} Let $\widetilde{\Omega}$ be a bounded domain in $\overline{\mathbb R^n}$. Then there no exists a weak $(p,q)$-quasiconformal reflection $\varphi:\overline{\mathbb R^n} \setminus\overline{\widetilde{\Omega}}\to \widetilde{\Omega}$, $n-1<q\leq p<n$.  
\end{thm}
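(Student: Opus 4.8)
The plan is to argue by contradiction, converting a hypothetical reflection into a weak $(q',p')$-quasiconformal mapping with \emph{both} exponents exceeding $n$ via the duality Theorem~\ref{CompThD}, and then to exhibit a capacitary obstruction. Suppose such a reflection $\varphi$ exists. Since $\widetilde{\Omega}$ is bounded, the set $A:=\mathbb R^n\setminus\overline{\widetilde{\Omega}}$ is a neighbourhood of the point at infinity, and $x_\ast:=\varphi(\infty)$ is an interior point of $\widetilde{\Omega}$; thus $\varphi$ restricts to a weak $(p,q)$-quasiconformal homeomorphism $A\to\widetilde{\Omega}\setminus\{x_\ast\}$ with $\varphi(y)\to x_\ast$ as $|y|\to\infty$. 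First I would apply Theorem~\ref{CompThD}, which is available precisely because $q>n-1$, to conclude that $\psi:=\varphi^{-1}\colon\widetilde{\Omega}\setminus\{x_\ast\}\to A$ generates a bounded composition operator $\psi^{\ast}\colon L^1_{q'}(A)\to L^1_{p'}(\widetilde{\Omega}\setminus\{x_\ast\})$, with $p'=p/(p-n+1)$ and $q'=q/(q-n+1)$. A direct computation shows that $n-1<q\le p<n$ forces $n<p'\le q'<\infty$; here the condition $p<n$ is exactly what yields $p'>n$.

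Next I would extract the contradiction from the behaviour of capacities under $\psi^{\ast}$. Feeding condenser functions into the operator inequality $\|\nabla(f\circ\psi)\|_{L_{p'}(\widetilde{\Omega}\setminus\{x_\ast\})}\le\|\psi^{\ast}\|\,\|\nabla f\|_{L_{q'}(A)}$ yields the capacitary estimate
$$
\cp_{p'}\big(\psi^{-1}(E);\widetilde{\Omega}\setminus\{x_\ast\}\big)^{1/p'}\le\|\psi^{\ast}\|\,\cp_{q'}(E;A)^{1/q'}
$$
for every condenser $E$ in $A$. I would take $E=E_R$ to be the spherical ring condenser with plates $S(0,R_1)$ and $S(0,R_2)$, $R_1$ fixed and $R_2\to\infty$, which lies in $A$ once $R_1$ is large. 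Because $q'>n$, an explicit radial cut-off shows $\cp_{q'}(E_R;A)\to0$ as $R_2\to\infty$. On the other hand $\psi^{-1}(E_R)=\varphi(E_R)$ is a ring around $x_\ast$ whose inner plate $\varphi(S(0,R_2))$ shrinks to $x_\ast$ while the outer plate stays at a fixed distance $\rho_0>0$. Integrating $|\nabla u|^{p'}$ in polar coordinates about $x_\ast$ over admissible $u$ gives the uniform lower bound
$$
\cp_{p'}\big(\varphi(E_R);\widetilde{\Omega}\setminus\{x_\ast\}\big)\ge\omega_{n-1}\Big(\int_0^{\rho_0}r^{-\frac{n-1}{p'-1}}\,dr\Big)^{1-p'}=:c>0 ,
$$
where finiteness of the integral, hence positivity of $c$, uses exactly $p'>n$ (so that $(n-1)/(p'-1)<1$). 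Letting $R_2\to\infty$ then contradicts the capacitary estimate.

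The main obstacle is this uniform lower bound on $\cp_{p'}(\varphi(E_R))$: it asserts that $x_\ast$ is not $p'$-polar and that this survives the distortion of the condenser by $\varphi$. I would need to control the geometry of the image spheres $\varphi(S(0,R))$ as $R\to\infty$ — namely that each encloses $x_\ast$ and lies in a ball $B(x_\ast,\rho(R))$ with $\rho(R)\to0$ — so that the polar-coordinate estimate genuinely applies along radial segments; removing the single point $x_\ast$ from the domain is harmless precisely because $p'>n$ makes points non-removable for $L^1_{p'}$. This is the analytic core of the Liouville-type phenomenon recorded in \cite{GU19}: a weak $(q',p')$-quasiconformal mapping with $p'>n$ admits no interior essential singularity of this blow-up type.
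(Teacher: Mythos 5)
Your opening step coincides exactly with the paper's: both arguments feed the hypothetical reflection into the duality Theorem~\ref{CompThD} to obtain a bounded composition operator $(\varphi^{-1})^{\ast}:L^1_{q'}\to L^1_{p'}$ with $n<p'\le q'<\infty$, and this is the only place where $q>n-1$ and $p<n$ are used. After that you diverge. The paper finishes softly: since $\widetilde{\Omega}$ is bounded, H\"older's inequality gives $L^1_{p'}(\widetilde{\Omega})\hookrightarrow L^1_{p}(\widetilde{\Omega})$, and composing $(\varphi^{-1})^{\ast}$, this embedding, and $\varphi^{\ast}$ shows that the identity map $L^1_{q'}(\overline{\mathbb R^n}\setminus\overline{\widetilde{\Omega}})\to L^1_{q}(\overline{\mathbb R^n}\setminus\overline{\widetilde{\Omega}})$ is bounded with $q<q'$, which forces the complement to have finite Lebesgue measure --- a contradiction. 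Your capacitary route (the point at infinity is $q'$-polar while its image $x_{\ast}=\varphi(\infty)$ must be non-$p'$-polar because $p'>n$) is a genuinely different realization of the same Liouville phenomenon: it localizes the obstruction at $\varphi(\infty)$ rather than at infinity, and it would survive in settings where the measure argument is not available; the price is that it is analytically heavier.

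There is, however, a gap in the analytic core, and it is not quite the one you flagged. The stated lower bound $\cp_{p'}\bigl(\varphi(E_R);\widetilde{\Omega}\setminus\{x_\ast\}\bigr)\ge\omega_{n-1}\bigl(\int_0^{\rho_0}r^{-(n-1)/(p'-1)}\,dr\bigr)^{1-p'}$ does not follow from integrating in polar coordinates: an admissible $u$ equals $1$ near $x_\ast$ and $0$ on $\varphi(F_0)$, but it need not vanish anywhere on the sphere $S(x_\ast,\rho_0)$, so along a typical radius the oscillation of $u$ can be arbitrarily small and the radial estimate yields nothing. What you wrote is the capacity of the spherical condenser with plates $\{x_\ast\}$ and $S(x_\ast,\rho_0)$, and your condenser does not dominate it by plate monotonicity. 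The repair is also not about controlling the image spheres --- they do shrink to $x_\ast$, by compactness of $\{|x|\ge R_2\}\cup\{\infty\}$ in $\overline{\mathbb R^n}$ --- but about exploiting $p'>n$ correctly: the puncture is removable for $W^{1}_{p',\loc}$, the extended $u$ is continuous with $u(x_\ast)=1$ by the Morrey embedding, and chaining the Morrey estimate along a fixed compact path in $\widetilde{\Omega}$ from $x_\ast$ to a fixed point of the plate $\varphi(F_0)$ (which is independent of $R_2$) gives $1\le C\,\|\nabla u\|_{L_{p'}(\widetilde{\Omega})}$ with $C$ independent of $R_2$. With that substitution your argument closes; as written, the key inequality is asserted rather than proved.
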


\begin{proof}
We prove this theorem by contradiction. Suppose that there exists a weak $(p,q)$-quasiconformal reflection $\varphi:\overline{\mathbb R^n} \setminus\overline{\widetilde{\Omega}}\to \widetilde{\Omega}$, $n-1<q\leq p<n$. Then the composition operator
$$
\varphi^{\ast}: L^1_p(\widetilde{\Omega}) \to L^1_q(\overline{\mathbb R^n} \setminus\overline{\widetilde{\Omega}})
$$
is bounded. By the composition duality theorem the composition operator
$$
\left(\varphi^{-1}\right)^{\ast}: L^1_{q'}(\overline{\mathbb R^n} \setminus\overline{\widetilde{\Omega}}) \to L^1_{p'}(\widetilde{\Omega}) 
$$
is bounded also. 

By the conditions of the theorem we have:
$$
n-1<q\leq p<n<p'=\frac{p}{p-(n-1)}<q'=\frac{q}{q-(n-1)}<\infty.
$$
Because $\widetilde{\Omega}$ is the bounded domain, then 
$$
L^1_{p'}(\widetilde{\Omega}) \hookrightarrow  L^1_p(\widetilde{\Omega}), 
$$
and so
$$
 L^1_{q'}(\overline{\mathbb R^n} \setminus\overline{\widetilde{\Omega}}) \hookrightarrow L^1_q(\overline{\mathbb R^n} \setminus\overline{\widetilde{\Omega}}).
$$
Hence the Lebesgue measure $|\overline{\mathbb R^n} \setminus\overline{\widetilde{\Omega}}|<\infty$. Contradiction.
\end{proof}

\medskip

\textbf{On $(p,p')$-composition reflections in $\mathbb R^2$.}  
Recall the following composition duality property \cite{U93}.

\begin{thm}
\label{dualcomp}
The homeomorphism $\varphi:\Omega\to\widetilde{\Omega}$,  $\Omega,\widetilde{\Omega}\subset\mathbb R^2$, induces a bounded composition
operator 
\[
\varphi^{\ast}:L^1_p(\widetilde{\Omega})\to L^1_q(\Omega),\,\,\,1<q\leq p<\infty,
\]
if and only if the inverse mapping $\varphi^{-1}:\widetilde{\Omega}\to\Omega$ induces a bounded composition operator 
\[
\left(\varphi^{-1}\right)^{\ast}:L^1_{q'}(\Omega)\to L^1_{p'}(\widetilde{\Omega}),\,\,\,1<p'=\frac{p}{p-1}\leq q'=q/(q-1)<\infty.
\]
\end{thm}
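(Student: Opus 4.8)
The plan is to derive both implications directly from the general composition duality theorem (Theorem~\ref{CompThD}), specialized to $n=2$, exploiting the fact that in the plane the exponent map $p\mapsto p'=p/(p-1)$ is the ordinary H\"older conjugation and hence an involution.

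For the forward direction I would simply invoke Theorem~\ref{CompThD} with $n=2$. The hypothesis $1<q\leq p<\infty$ is exactly the condition $n-1<q\leq p<\infty$ in this dimension, and the dual exponents $p'=p/(p-n+1)$, $q'=q/(q-n+1)$ of that theorem reduce to $p'=p/(p-1)$, $q'=q/(q-1)$. The theorem then yields at once that $(\varphi^{-1})^{\ast}:L^1_{q'}(\Omega)\to L^1_{p'}(\widetilde{\Omega})$ is bounded. Before applying it I would record the order reversal: since $x\mapsto x/(x-1)$ is decreasing on $(1,\infty)$, the inequality $q\leq p$ passes to $p'\leq q'$, so the target operator respects the required ordering $1<p'\leq q'<\infty$.

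For the reverse direction the idea is to apply the same theorem a second time, now to the homeomorphism $\psi:=\varphi^{-1}:\widetilde{\Omega}\to\Omega$. By assumption $\psi^{\ast}=(\varphi^{-1})^{\ast}:L^1_{q'}(\Omega)\to L^1_{p'}(\widetilde{\Omega})$ is bounded with $1<p'\leq q'<\infty$, which is precisely the hypothesis of Theorem~\ref{CompThD} applied to $\psi$ (here $\Omega$ plays the role of the target and $\widetilde{\Omega}$ of the source). The theorem therefore produces a bounded composition operator for the inverse $\psi^{-1}=\varphi$, with exponents given by the H\"older conjugates of $q'$ and $p'$.

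The only point requiring verification---and the crux of why the statement is an equivalence in $\mathbb{R}^2$ but not in higher dimensions---is the involution identity $(p')'=p$ and $(q')'=q$. A short computation gives $p'-1=1/(p-1)$, whence $(p')'=p'/(p'-1)=p$, and likewise $(q')'=q$. Thus the second application of the duality theorem returns exactly the operator $\varphi^{\ast}:L^1_p(\widetilde{\Omega})\to L^1_q(\Omega)$, closing the equivalence. I would note that for $n>2$ the map $p\mapsto p/(p-n+1)$ fails to be an involution except at the single value $p=n$, which is precisely the obstruction to upgrading Theorem~\ref{CompThD} to a full equivalence in general dimension, and explains why the planar statement is special.
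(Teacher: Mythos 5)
Your argument is correct. Note that the paper gives no proof of this statement at all: it is recalled as a known result and attributed to [U93], so there is nothing to compare line by line. Your derivation --- two applications of Theorem~\ref{CompThD} specialized to $n=2$, together with the observations that $x\mapsto x/(x-1)$ is an order-reversing involution on $(1,\infty)$ and that the hypothesis $n-1<p'\leq q'<\infty$ needed for the second application follows from $1<q\leq p<\infty$ --- is exactly the natural way to obtain the planar equivalence from the one-directional general theorem, and all the exponent bookkeeping checks out. Your closing remark correctly identifies why the equivalence is special to the plane: for $n>2$ the map $p\mapsto p/(p-n+1)$ is an involution only at $p=n$, so the second application of the duality theorem would not return the original pair of exponents.
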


Hence. in the two-dimensional case we have the following duality extension property, as a consequence of Theorem~\ref{dualcomp}.

\begin{thm} 
\label{dual} Let $\widetilde{\Omega}\subset\mathbb R^2$, be a Euclidean domain. Then there exists a $(p,p')$-composition reflection $\varphi$ of $\Omega=\overline{\mathbb R^2}\setminus \overline{\widetilde{\Omega}}$ onto $\widetilde{\Omega}$, $1<q\leq p<\infty$, if and only if there exists a $(p,p')$-composition reflection $\varphi^{-1}$ of $\widetilde{\Omega}$ onto  $\Omega=\overline{\mathbb R^2}\setminus \overline{\widetilde{\Omega}}$. 
\end{thm}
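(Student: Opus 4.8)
The plan is to derive Theorem~\ref{dual} as a direct consequence of the planar composition duality Theorem~\ref{dualcomp}; the entire content reduces to the arithmetic fact that the pair $(p,p')$ is \emph{self-dual} under inversion together with the observation that the identity normalization on the common boundary is preserved when passing to the inverse homeomorphism.

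First I would unwind the definitions to place myself in the hypotheses of Theorem~\ref{dualcomp}. By definition, a $(p,p')$-composition reflection $\varphi$ of $\Omega=\overline{\mathbb R^2}\setminus\overline{\widetilde{\Omega}}$ onto $\widetilde{\Omega}$ is a homeomorphism $\varphi:\Omega\to\widetilde{\Omega}$ that is weak $(p,p')$-quasiconformal and satisfies $\varphi\big|_{\partial\widetilde{\Omega}}=\mathrm{id}$. The weak $(p,p')$-quasiconformality means exactly that $\varphi$ induces a bounded composition operator $\varphi^{\ast}:L^1_p(\widetilde{\Omega})\to L^1_{p'}(\Omega)$, so I am precisely in the situation of Theorem~\ref{dualcomp} with target exponent $q=p'$. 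Note this forces $p'\le p$, i.e. $p\ge 2$, which is the content of the restriction $1<q\le p<\infty$ read with $q=p'$.

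Next I would apply Theorem~\ref{dualcomp} with $q=p'$. It asserts that $\varphi^{-1}:\widetilde{\Omega}\to\Omega$ induces a bounded composition operator $(\varphi^{-1})^{\ast}:L^1_{q'}(\Omega)\to L^1_{p'}(\widetilde{\Omega})$ with $q'=(p')'$. The key computation is $(p')'=p$: since $p'=p/(p-1)$ one has $p'-1=1/(p-1)$, so $(p')'=p'/(p'-1)=p$. Hence $(\varphi^{-1})^{\ast}:L^1_{p}(\Omega)\to L^1_{p'}(\widetilde{\Omega})$ is bounded, that is, $\varphi^{-1}$ is again weak $(p,p')$-quasiconformal. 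This self-duality of the exponent pair $(p,p')$ is the precise mechanism the theorem exploits, and it is special to dimension two, where Theorem~\ref{dualcomp} holds with the conjugate exponents $p'=p/(p-1)$.

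It then remains to verify that $\varphi^{-1}$ qualifies as a composition reflection of $\widetilde{\Omega}$ onto $\Omega$, i.e. to check the boundary normalization. Since $\widetilde{\Omega}$ and $\Omega$ are complementary in $\overline{\mathbb R^2}$, they share the common boundary $\partial\widetilde{\Omega}=\partial\Omega$, and on this set $\varphi=\mathrm{id}$; because the identity is its own inverse, $\varphi^{-1}=\mathrm{id}$ there as well, so the required continuation of $\varphi^{-1}$ is defined. This establishes the forward implication, and the converse follows by applying the identical argument to $\varphi^{-1}$ in the role of $\varphi$, since Theorem~\ref{dualcomp} is itself an equivalence. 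I expect the only point needing care to be the bookkeeping at the point at infinity: one should remark that $\overline{\mathbb R^2}\setminus\mathbb R^2$ is a single point, hence negligible for the gradient seminorms, so that working in $\overline{\mathbb R^2}$ rather than $\mathbb R^2$ does not affect the boundedness of either composition operator.
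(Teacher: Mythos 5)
Your proposal is correct and follows exactly the route the paper intends: the paper offers no written proof beyond declaring the theorem ``a consequence of Theorem~\ref{dualcomp}'', and your argument --- applying that duality with $q=p'$, using the self-duality $(p')'=p$ of the conjugate pair, and noting that the identity boundary normalization is inherited by $\varphi^{-1}$ --- is precisely the intended mechanism. Your additional observations (that the statement implicitly requires $p\ge 2$ so that $p'\le p$, and that the point at infinity is negligible for the seminorms) fill in details the paper leaves unstated.
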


Let us recall that a function $\Phi$ defined on the class of open subsets of $\mathbb R^n$ and taking nonnegative values is called a quasiadditive set function (see, for example, \cite{VU04}) if, for all open sets $U_1\subset U_2\subset\mathbb R^n$, we have
$$
\Phi(U_1)\leq \Phi(U_2),
$$
and there exists a positive constant $\theta$ such that for every collection of pairwise disjoint open sets $\left\{U_k\subset\mathbb R^n\right\}_{k\in\mathbb N}$ we have
$$
\sum\limits_{k=1}^{\infty}\leq \theta \Phi\left(\bigcup_{k=1}^{\infty}U_k\right).
$$

By using the generalized Ahlfors condition \cite{KUZ22}, we obtain the following necessary condition of existence of $(p,p')$-composition reflections.

\begin{thm}
\label{ahl} Let $\widetilde{\Omega}\subset\mathbb R^2$, be a Euclidean domain and there exists a $(p,p')$-composition reflection $\varphi$ of $\Omega=\overline{\mathbb R^2}\setminus \overline{\widetilde{\Omega}}$ onto $\widetilde{\Omega}$, $1<p<\infty$. Then 
$$
\Phi_i(B(x,r))^{p-p'}|B(x,r)\cap\Omega|^{p'}\geq c_0 |B(x,r)|^p, \,\,0<r< 1,
$$
and 
$$
\Phi_o(B(x,r))^{p-p'}|B(x,r)\cap(\mathbb R^2\setminus\Omega)|^{p'}\geq c_0 |B(x,r)|^p, \,\,0<r< 1,
$$
where $\Phi_i$ and $\Phi_o$ are quasiadditive set functions associated with the extension operators, $1/p+1/p'=1$.
\end{thm}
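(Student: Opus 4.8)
The plan is to convert the single reflection $\varphi$ into a symmetric pair of bounded composition operators acting across the common boundary $\Omega\cap\partial\widetilde{\Omega}$, to attach to each its quasiadditive set function, and then to feed each operator into the generalized Ahlfors condition of \cite{KUZ22}.

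First I would record the two operators. By hypothesis $\varphi:\Omega\to\widetilde{\Omega}$ is a weak $(p,p')$-quasiconformal mapping, so by Theorem~\ref{CompTh} it generates the bounded composition operator $\varphi^{\ast}:L^1_p(\widetilde{\Omega})\to L^1_{p'}(\Omega)$, which, read together with the boundary identification $\varphi|_{\Omega\cap\partial\widetilde{\Omega}}=\mathrm{id}$, realizes the extension of Sobolev functions out of the inner domain $\widetilde{\Omega}$ into the outer domain $\Omega$. For the opposite direction I would invoke the duality Theorem~\ref{dual} (equivalently the composition duality Theorem~\ref{dualcomp}): the existence of a $(p,p')$-reflection of $\Omega$ onto $\widetilde{\Omega}$ is equivalent to the existence of a $(p,p')$-reflection $\varphi^{-1}$ of $\widetilde{\Omega}$ onto $\Omega$, giving a second bounded composition operator $\left(\varphi^{-1}\right)^{\ast}:L^1_p(\Omega)\to L^1_{p'}(\widetilde{\Omega})$ that extends functions out of $\Omega$ into $\widetilde{\Omega}=\mathbb{R}^2\setminus\overline{\Omega}$. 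The point to verify here is the conjugacy arithmetic: with $q=p'$ one has $q'=q/(q-1)=p$ and $p'=p/(p-1)$, so that both dual operators again carry the homogeneous indices $(p,p')$ and the two roles $\Phi_i,\Phi_o$ are genuinely symmetric.

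Next I would attach to each operator its quasiadditive set function following \cite{VU04}: for the operator generated by $\varphi$ I set $\Phi_i(U)=\int_{U\cap\Omega}K_p^{\kappa}\,dx$ for open $U\subset\mathbb{R}^2$, where $K_p$ is the $p$-dilatation of $\varphi$ and $1/p'-1/p=1/\kappa$, and I define $\Phi_o$ by the analogous integral of the $\kappa$-th power of the $p$-dilatation of $\varphi^{-1}$ over $U\cap\widetilde{\Omega}$. Monotonicity under inclusion and countable quasiadditivity over pairwise disjoint open sets are then immediate from the additivity of the Lebesgue integral, and these are precisely the set functions named in the statement. Note that $\Phi_i$ is supported on the side $\Omega$ into which $\varphi$ deposits the extension, while $\Phi_o$ is supported on $\widetilde{\Omega}=\mathbb{R}^2\setminus\Omega$.

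Finally I would apply the generalized Ahlfors condition of \cite{KUZ22}, which forces the set function associated with a bounded $(p,p')$-extension operator to control, on every small ball, the Lebesgue measure of the region into which the extension is deposited. Applied to the operator generated by $\varphi$ this yields $\Phi_i(B(x,r))^{p-p'}|B(x,r)\cap\Omega|^{p'}\geq c_0|B(x,r)|^p$, and applied to the operator generated by $\varphi^{-1}$, whose extension lives in $\mathbb{R}^2\setminus\Omega$, it yields the companion inequality. I expect the substantive difficulty to sit inside this last step rather than in the duality: the generalized Ahlfors estimate is obtained by testing the extension against the capacitary potential of the ring condenser attached to $B(x,r)$ and balancing the energy contributions from the two sides of the boundary, and it is this balancing, together with keeping the exponent $p-p'$ correctly oriented (it turns negative once $p<2$), that produces the homogeneous exponents $(p-p')+p'=p$ appearing in the two inequalities.
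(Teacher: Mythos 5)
The paper contains no proof of Theorem~\ref{ahl}: the statement is given bare, preceded only by the remark that it follows ``by using the generalized Ahlfors condition \cite{KUZ22}''. Your route --- use the reflection together with the planar duality (Theorems~\ref{dualcomp} and \ref{dual}) to produce two bounded $(p,p')$-extension operators, one out of $\widetilde{\Omega}$ and one out of $\Omega$, attach to each the quasiadditive set function $U\mapsto\int_{U}K_p^{\kappa}\,dx$ coming from Theorem~\ref{CompTh} and \cite{VU04}, and then apply the generalized Ahlfors condition of \cite{KUZ22} to each operator --- is exactly the derivation the paper is gesturing at, and your conjugacy arithmetic ($q=p'$ forces $q'=p$) is correct.

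One substantive correction, though. You describe the estimate of \cite{KUZ22} as controlling ``the Lebesgue measure of the region into which the extension is deposited''. It is oriented the other way: for a $(p,q)$-extension domain $D$ the condition reads $\Phi(B(x,r))^{p-q}\,|B(x,r)\cap D|^{q}\geq c_0|B(x,r)|^{p}$, i.e. it bounds from below the measure of the ball's intersection with the domain being extended \emph{from} (for $p=q$ it degenerates to the classical measure-density form $|B\cap D|\geq c|B|$, and the purpose of \cite{KUZ22} is precisely to deduce $|\partial D|=0$ from the density of $D$ at its own boundary points; the capacity test function in that proof is a cut-off of $B(x,r)\cap D$, which fixes the orientation). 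Consequently your pairing is swapped: the operator induced by $\varphi$, which extends out of $\widetilde{\Omega}$, yields the inequality involving $|B(x,r)\cap(\mathbb R^2\setminus\Omega)|$, while the dual operator induced by $\varphi^{-1}$, which extends out of $\Omega$, yields the one involving $|B(x,r)\cap\Omega|$. Since you have both operators, both inequalities are still obtained and the theorem as stated survives the relabelling, but the heuristic you attach to the key cited estimate is wrong and would mislead if only one reflection were available. Two further points you inherit from the paper rather than create: converting a composition reflection into an extension operator uses the gluing step of Theorem~\ref{ext_pq}, which there requires the interface $\Omega\cap\partial\widetilde{\Omega}$ to be a Lipschitz surface, a hypothesis absent from Theorem~\ref{ahl}; and for $1<p<2$ one has $p'>p$, so the hypothesis of a $(p,p')$-composition reflection falls outside the framework $q\leq p$ of Theorem~\ref{CompTh} and the exponent $p-p'$ changes sign, so the statement is really only meaningful for $p\geq 2$.
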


\section{Extension operators on $p$-Ahlfors domains}

In this subsection we introduce the notion of $p$-Ahlfors domains and use it for constructions of bounded extension operators.

In the construction of $p$-Ahlfors domains we use the notion of $p$-quasiconformal mappings. Recall that a mapping $\varphi:\Omega\to\widetilde{\Omega}$ is called a weak $p$-quasiconformal mapping \cite{GGR95,U93} if $\varphi\in W^1_{p,\loc}(\Omega)$, has finite distortion and
$$
K_{p,p}^{p}(\varphi;\Omega)=\ess\sup\limits_{\Omega}\frac{|D\varphi(x)|^p}{|J(x,\varphi)|}<\infty.
$$
The following example refines the example from \cite{GGR95} and demonstrates that the behaviour of $p$-quasiconformal mappings ($p \neq n$) can be essentially different from the behaviour of usual quasiconformal mappings.

\vskip 0.2cm
\noindent
{\bf Examples of $p$-quasiconformal mappings.}  Let $ \vec{\alpha}=(\alpha_1,\alpha_2,\dots ,\alpha_n)$ be a multiindex, $-1< \alpha_1 \leq \alpha_2 \leq \dots \leq \alpha_n$, and $\alpha=\sum\limits_{i=1}^n{\alpha_i}$. Consider the homeomorphism $\varphi:\mathbb B^n \to \mathbb R^n$ defined by the following way: 
$$
\varphi(x)=(x_1\cdot |x|^{\alpha_1},x_2\cdot |x|^{\alpha_2},\dots, x_n\cdot|x|^{\alpha_n}).
$$
Then
$$
\frac{\partial\varphi_i}{\partial x_j}=\left(1+\alpha_i \frac{x_i^2}{|x|^2}\right)|x|^{\alpha_i},\,\,\text{if}\,\,i= j,\,\,\frac{\partial\varphi_i}{\partial x_j}=0, \,\,\text{if}\,\,i\ne j.
$$
Because $\alpha_1 \leq \alpha_2 \leq \dots \leq \alpha_n$ we have 
$$
|D\varphi(x)|=\left(1+\alpha_1 \frac{x_1^2}{|x|^2}\right)|x|^{\alpha_1}\leq \left(1+\alpha_1 \right)|x|^{\alpha_1}.
$$
Now we estimate the Jacobain of $\varphi$:
\begin{multline*}
|J(x,\varphi)|=\prod\limits_{i=1}^n \left(1+\alpha_i \frac{x_i^2}{|x|^2}\right)|x|^{\alpha_i}=|x|^{\alpha}\prod\limits_{i=1}^n \left(1+\alpha_i \frac{x_i^2}{|x|^2}\right)\\
\geq |x|^{\alpha} \left(1+\sum\limits_{i=1}^n\alpha_i \frac{x_i^2}{|x|^2}\right)\geq |x|^{\alpha} \left(1+\sum\limits_{i=1}^n\alpha_i \right)=|x|^{\alpha}(1+\alpha).
\end{multline*}
Hence
$$
\ess\sup\limits_{\Omega}\frac{|D\varphi(x)|^p}{|J(x,\varphi)|}\leq \frac{\left(1+\alpha_1 \right)^p|x|^{\alpha_1 p}}{|x|^{\alpha} \left|1+\alpha \right|}=\frac{\left(1+\alpha_1 \right)^p}{|1+\alpha|}\cdot
|x|^{\alpha_1 p-\alpha}.
$$

Now we consider three cases: $ \alpha_1 > 0$, $\alpha  \alpha_1^{-1}=n$ and $ \alpha_1 < 0$. 

\vskip 0.2cm
\noindent
1) Let $\alpha_1 > 0 $ then $ p \geq \alpha \cdot \alpha_1^{-1}$ and $\alpha \alpha_1^{-1} \geq n$. So the homeomorphism
 $\varphi$ is $p$-quasiconformal for all $p \geq  \alpha  \alpha_1^{-1}$. If $p  <\alpha  \alpha_1^{-1}$ the homeomorphism $\varphi$ is not
$p$-quasiconformal.

\vskip 0.2cm
\noindent
2) Let $\alpha  \alpha_1^{-1} = n$ then $\alpha_1 = \alpha_i$ for all $i=1,2.\dots n$ and the homeo\-mor\-phism $\varphi$ is
quasi\-con\-for\-mal.

\vskip 0.2cm
\noindent
3) Let $ \alpha \leq \alpha_1 < 0$ then $\varphi$ is $p$-quasiconformal for all  $p \in [1, \alpha \alpha_1^{-1}]$. In this case signs of the numbers $\alpha_1, \alpha_2, \dots , \alpha_n$ can be different. We can choose the numbers such that $\alpha_1 < \alpha_2 < \dots  < \alpha_{n-1} < 0$,  $\alpha_n > 0$ and $\alpha \leq \alpha_1$. In this case the ho\-meo\-mor\-phism $\varphi$ is $p$-quasiconformal for $1 \leq p < \alpha \alpha_1^{-1} < n-1$ and both  homeomorphisms $\varphi$, $\varphi^{-1}$ do not possess the Lipschitz property. 

\subsection{Extension operators on $p$-Ahlfors domains in $\mathbb R^n$}

Let $\widetilde{\Omega}_p \subset \mathbb R^n$ be a bounded domain. Then $\widetilde{\Omega}_p$ is called a $p$-Ahlfors domain if there exists a $p$-quasiconformal mapping 
$$
\varphi_p: \mathbb R^n\to\mathbb R^n
$$
such that $\varphi_p$ maps the unit disc $\mathbb B^n$ onto $\widetilde{\Omega}_p$, i.e. $\widetilde{\Omega}_p=\varphi_p(\mathbb B^n)$.

\begin{thm}
\label{ext_space} Let $\widetilde{\Omega}_p\subset\mathbb R^n$ be a $p$-Ahlfors domain, $n\leq p<\infty$. Then there exist a Sobolev extension operator
$$
E: L^1_p(\widetilde{\Omega}_p) \to L^1_{p'}(\mathbb R^n), \,\,p'=\frac{p}{p-n+1}.
$$
\end{thm}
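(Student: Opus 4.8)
The plan is to transfer the extension problem from the (possibly irregular) domain $\widetilde{\Omega}_p$ to the unit ball $\mathbb B^n$, where the classical extension operator is available, and then to transfer the extended function back to $\mathbb R^n$ by means of the composition duality theorem. Concretely, I would realize $E$ as a composition of four bounded operators,
$$
E = \left(\varphi_p^{-1}\right)^{\ast}\circ E_{\mathbb B}\circ j\circ \varphi_p^{\ast},
$$
where $\varphi_p^{\ast}$ pulls a function on $\widetilde{\Omega}_p$ back to the ball, $j$ is the embedding $L^1_p(\mathbb B^n)\hookrightarrow L^1_{p'}(\mathbb B^n)$, $E_{\mathbb B}$ is a classical extension operator for the ball, and $\left(\varphi_p^{-1}\right)^{\ast}$ pushes the extended function forward onto $\mathbb R^n$. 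The key organizing idea is that every operator acting on functions defined on $\mathbb R^n$ is kept at the single exponent $p'$, which is exactly the exponent produced by the quasiconformal duality of Theorem~\ref{CompThD}.

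First I would restrict the globally $p$-quasiconformal homeomorphism $\varphi_p$ to $\mathbb B^n$. Since $K_{p,p}(\varphi_p;\mathbb B^n)\leq K_{p,p}(\varphi_p;\mathbb R^n)<\infty$ and $\varphi_p\big|_{\mathbb B^n}\colon\mathbb B^n\to\widetilde{\Omega}_p$ is a homeomorphism, Theorem~\ref{CompTh} (with $q=p$) yields the bounded composition operator $\varphi_p^{\ast}\colon L^1_p(\widetilde{\Omega}_p)\to L^1_p(\mathbb B^n)$, $h:=f\circ\varphi_p$. Because $p\geq n$ we have $p'=p/(p-n+1)\leq p$, so on the bounded ball H\"older's inequality gives the bounded embedding $j\colon L^1_p(\mathbb B^n)\hookrightarrow L^1_{p'}(\mathbb B^n)$ with norm at most $|\mathbb B^n|^{1/p'-1/p}$, and thus $h\in L^1_{p'}(\mathbb B^n)$.

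Next, since $\mathbb B^n$ is a bounded Lipschitz domain and $1<p'\leq n<\infty$, the classical Calder\'on--Stein theorem \cite{S70} (or \cite{J81}, as the ball is uniform) provides a bounded $(p',p')$-extension operator $E_{\mathbb B}\colon L^1_{p'}(\mathbb B^n)\to L^1_{p'}(\mathbb R^n)$; set $H:=E_{\mathbb B}(h)$, so that $H\big|_{\mathbb B^n}=h$. To transfer back I would apply Theorem~\ref{CompThD} to $\varphi_p$ with $q=p$: since $n-1<p<\infty$, the inverse $\varphi_p^{-1}$ generates the bounded composition operator $\left(\varphi_p^{-1}\right)^{\ast}\colon L^1_{p'}(\mathbb R^n)\to L^1_{p'}(\mathbb R^n)$, where the target exponent $q'=q/(q-n+1)$ at $q=p$ equals $p'=p/(p-n+1)$. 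Defining $E(f):=\left(\varphi_p^{-1}\right)^{\ast}(H)=H\circ\varphi_p^{-1}$ gives an element of $L^1_{p'}(\mathbb R^n)$, and for $y\in\widetilde{\Omega}_p=\varphi_p(\mathbb B^n)$ one has $\varphi_p^{-1}(y)\in\mathbb B^n$, whence $E(f)(y)=H(\varphi_p^{-1}(y))=h(\varphi_p^{-1}(y))=f(y)$ quasi-everywhere, so $E$ is genuinely an extension. Chaining the four operator norms then yields boundedness of $E$.

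The conceptual heart of the argument, and the step I expect to require the most care, is the exponent bookkeeping: the loss of integrability from $p$ down to $p'=p/(p-n+1)$ is forced, and it is supplied precisely by the composition duality theorem, so one must verify that $q'$ evaluated at $q=p$ coincides with the target exponent $p'$ on the nose (and that the standing hypotheses $n-1<q\leq p<\infty$ of Theorem~\ref{CompThD} hold, which they do since $p\geq n$). A structural advantage of routing through the ball is that no regularity assumption on $\partial\widetilde{\Omega}_p$ is needed: unlike Theorem~\ref{ext_pq}, we never glue a function across $\partial\widetilde{\Omega}_p$ by hand. The generally non-Lipschitz quasiconformal image of the sphere never appears explicitly, because $E_{\mathbb B}$ already produces a genuine Sobolev function on all of $\mathbb R^n$ and the globally defined operator $\left(\varphi_p^{-1}\right)^{\ast}$ preserves global Sobolev regularity; the only interface where a function is matched is the smooth sphere $\partial\mathbb B^n$, which is handled by the classical extension.
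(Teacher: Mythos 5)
Your proposal is correct and follows essentially the same route as the paper's own proof: pull back to the unit ball via $\varphi_p^{\ast}$, drop the exponent from $p$ to $p'$ by H\"older's inequality on the bounded ball, apply the classical Calder\'on--Stein extension, and push forward to $\mathbb R^n$ with the inverse composition operator supplied by the duality theorem, checking that $q'$ at $q=p$ equals $p'$. The only cosmetic difference is that you isolate the embedding $L^1_p(\mathbb B^n)\hookrightarrow L^1_{p'}(\mathbb B^n)$ as an explicit fourth factor $j$, which the paper absorbs into a single remark.
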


\begin{proof}
Since $\widetilde{\Omega}_p$ is a $p$-Ahlfors domain in the Euclidean space $\mathbb R^n$, there exists a $p$-quasiconformal mapping 
$$
\varphi_p: \mathbb R^n\to\mathbb R^n
$$
such that $\widetilde{\Omega}_p=\varphi_p(\mathbb B^n)$. This $p$-quasiconformal mapping generates a bounded composition operator
$$
\varphi_p^{\ast}: L^1_p(\widetilde{\Omega}_p)\to L^1_p(\mathbb B^n).
$$
Now let a function $f\in L^1_p(\widetilde{\Omega}_p)$. Then the composition $g=\varphi_p^{\ast}(f)\in L^1_p(\mathbb B)$ and so $g=\varphi_p^{\ast}(f)$ belongs to the Sobolev space $L^1_{p'}(\mathbb B^n)$, $p'<n<p$, since the unit ball has finite measure, and we have the continuous embedding $L^1_p(\mathbb B^n) \hookrightarrow L^1_{p'}(\mathbb B^n)$, because $p'<p$.

Since $\mathbb B^n$ is the unit ball, then by \cite{C61,S70} there exists a bounded extension operator
$$
E_{\mathbb B^n}: L^1_{p'}(\mathbb B^n) \to L^1_{p'}(\mathbb R^n).
$$
Denote the extended function by $\widetilde{g}=E_B(g)\in L^1_{p'}(\mathbb R^n)$. By the composition duality property (Theorem~\ref{CompThD}) there exists a $p'$-quasiconformal mapping 
$$
\varphi_{p'}: \mathbb R^n\to\mathbb R^n, \,\, \varphi_{p'}=\varphi^{-1}_p,
$$
such that $\mathbb R^n\setminus \mathbb B^n=\varphi_{p'}(\mathbb R^n\setminus \widetilde{\Omega}_p)$. This $p'$-quasiconformal mapping generates a bounded composition operator
$$
\varphi_{p'}^{\ast}: L^1_{p'}(\mathbb R^n)\to L^1_{p'}(\mathbb R^n).
$$

Let a function $g\in L^1_{p'}(\mathbb R^n)$. Then the composition $\widetilde{f}=\varphi_{p'}^{\ast}(g)\in L^1_{p'}(\mathbb R^n)$ and
$$
\widetilde{f}\big|_{\Omega_p}=f\circ\varphi_p\circ\varphi_{p'}=f\circ\varphi_p\circ\varphi_{p}^{-1}=f.
$$
Hence we construct a bounded extension operator
$$
E: L^1_p(\widetilde{\Omega}_p) \to L^1_{p'}(\mathbb R^n), \,\,p'=\frac{p}{p-n+1},
$$
as a composition of the composition operator $\varphi^{\ast}$, the classical extension operator from the ball $E_B$ and the inverse composition operator $\varphi_{p'}^{\ast}$.
\end{proof}

\section{On Neumann $(p,q)$-eigenvalue problem}

In this section, we consider the non-linear Neumann $(p,q)$-eigenvalue problem \cite{GPU24}, which can be formulated in bounded Lipschitz domains $\Omega\subset\mathbb R^n$ as follows:
\begin{equation}\label{maineqn}
-\Delta_p u:=-\text{div}(|\nabla u|^{p-2}\nabla u)=\mu\|u\|_{L_q(\Omega)}^{p-q}|u|^{q-2}u\text{ in }\Omega,\quad\frac{\partial u}{\partial\nu}=0\text{ on }\partial \Omega.
\end{equation}

We study this  Neumann $(p,q)$-eigenvalue problem in the weak formulation:
\begin{equation}
\label{mainweak}
\int\limits_{\Omega}|\nabla u|^{p-2}\nabla u\cdot \nabla w\,dx=\lambda\|u\|_{L_{q}(\Omega)}^{p-q}\int\limits_{\Omega}|u|^{q-2}u\cdot w\,dx,
\end{equation}
for any function $w\in W^1_p(\Omega)$, which can be posed in  bounded non-Lipschitz domains $\Omega\subset\mathbb R^n$.

In \cite{GPU24} it was proved that the first non-trivial Neumann $(p,q)$-eigenvalues can be characterized by the the Min-Max Principle
$$
\mu_{p,q}(\Omega)\\=\min \left\{\frac{\|\nabla u\|_{L_p(\Omega)}^p}{\|u\|_{L_q(\Omega)}^p} :
u \in W^{1}_{p}(\Omega) \setminus \{0\}, \int_{\Omega} |u|^{q-2}u~dx=0 \right\}.
$$

Moreover, $\mu_{p,q}(\Omega)^{-\frac{1}{p}}$ is the best constant $B_{p,q}(\Omega)$ in the following Poincar\'e-Sobolev inequality 
$$
\inf\limits_{c\in\mathbb R}\|f-c\|_{L_q(\Omega)}\leq B_{p,q}(\Omega)\|\nabla f \|_{L_p(\Omega)},\,\,\, f\in W^{1}_{p}(\Omega).
$$

In the following theorem we give the weak monotonicity property of Neumann eigenvalues in Sobolev extension domains.

\begin{thm} 
\label{thm:mon}
Let $\widetilde{\Omega}\subset \mathbb R^n$ be a $(p,q)$-extension domain. Suppose that in a bounded domain $\Omega\supset\widetilde{\Omega}$ there exists the compact embedding operator $i:W^1_q(\Omega)\hookrightarrow L_r(\Omega)$.  Then
$$
\mu_{p,r}(\widetilde{\Omega}) \geq \frac{1}{\|E_{\Omega}\|^p} \cdot \left(\mu_{q,r}(\Omega)\right)^{\frac{p}{q}},
$$
where $\|E_{\Omega}\|$ denotes the norm of a bounded linear extension operator 
$$
E_{\Omega}: L^1_p(\widetilde{\Omega})\to L^1_q(\Omega).
$$
\end{thm}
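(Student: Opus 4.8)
The plan is to pass from the eigenvalue formulation to the equivalent best-constant (Poincar\'e--Sobolev) formulation and then transport a test function across the extension operator. Recall from the excerpt that $\mu_{p,r}(\widetilde{\Omega})^{-\frac{1}{p}}=B_{p,r}(\widetilde{\Omega})$ and $\mu_{q,r}(\Omega)^{-\frac{1}{q}}=B_{q,r}(\Omega)$ are the best constants in the Poincar\'e--Sobolev inequalities on $\widetilde{\Omega}$ and $\Omega$ respectively. Since $x\mapsto x^{-\frac{1}{p}}$ is decreasing, raising the claimed bound to the power $-\frac{1}{p}$ shows that the assertion $\mu_{p,r}(\widetilde{\Omega})\geq \|E_{\Omega}\|^{-p}\,\mu_{q,r}(\Omega)^{p/q}$ is equivalent to the single constant inequality
\[
B_{p,r}(\widetilde{\Omega})\leq \|E_{\Omega}\|\,B_{q,r}(\Omega).
\]
It therefore suffices to establish this last inequality.

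To this end I would fix $f\in L^1_p(\widetilde{\Omega})$ and set $g:=E_{\Omega}f\in L^1_q(\Omega)$, so that $g\big|_{\widetilde{\Omega}}=f$ and, by the very definition of the operator norm, $\|\nabla g\|_{L_q(\Omega)}\leq \|E_{\Omega}\|\,\|\nabla f\|_{L_p(\widetilde{\Omega})}$. The compact embedding $W^1_q(\Omega)\hookrightarrow L_r(\Omega)$ guarantees $\mu_{q,r}(\Omega)>0$, i.e.\ $B_{q,r}(\Omega)<\infty$, so the Poincar\'e--Sobolev inequality is available on $\Omega$ and gives $\inf_{c\in\mathbb R}\|g-c\|_{L_r(\Omega)}\leq B_{q,r}(\Omega)\|\nabla g\|_{L_q(\Omega)}$. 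The remaining step is the elementary monotonicity of the distance-to-constants functional under restriction: choosing the constant $c^{\ast}$ that realizes the infimum on $\Omega$, and using $\widetilde{\Omega}\subset\Omega$ together with $g=f$ on $\widetilde{\Omega}$,
\[
\inf_{c\in\mathbb R}\|f-c\|_{L_r(\widetilde{\Omega})}\leq \|f-c^{\ast}\|_{L_r(\widetilde{\Omega})}=\|g-c^{\ast}\|_{L_r(\widetilde{\Omega})}\leq \|g-c^{\ast}\|_{L_r(\Omega)}=\inf_{c\in\mathbb R}\|g-c\|_{L_r(\Omega)}.
\]
Chaining these three estimates yields $\inf_{c}\|f-c\|_{L_r(\widetilde{\Omega})}\leq \|E_{\Omega}\|\,B_{q,r}(\Omega)\,\|\nabla f\|_{L_p(\widetilde{\Omega})}$ for every admissible $f$, which is precisely $B_{p,r}(\widetilde{\Omega})\leq \|E_{\Omega}\|\,B_{q,r}(\Omega)$.

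The only delicate point, and the step I would check most carefully, is the applicability of the Poincar\'e--Sobolev inequality on $\Omega$ to the extended function $g$, which a priori lies only in the seminormed space $L^1_q(\Omega)$ rather than in $W^1_q(\Omega)$. One must confirm that the best-constant inequality, involving only $\nabla g$ and the $L_r$-distance of $g$ to the constants, holds for all $g$ with $\nabla g\in L_q(\Omega)$, the finiteness of $B_{q,r}(\Omega)$ being supplied by the compactness of the embedding (which also makes $\widetilde{\Omega}$ itself admit a finite $B_{p,r}$, so the inequality is not vacuous). Everything else reduces to the bookkeeping of the norm of $E_{\Omega}$ and the trivial spatial monotonicity $\|\cdot\|_{L_r(\widetilde{\Omega})}\leq\|\cdot\|_{L_r(\Omega)}$.
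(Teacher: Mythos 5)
Your proof is correct and follows essentially the same route as the paper: reduce the eigenvalue inequality to the comparison of best Poincar\'e--Sobolev constants $B_{p,r}(\widetilde{\Omega})\leq \|E_{\Omega}\|\,B_{q,r}(\Omega)$, then chain the restriction monotonicity of $\inf_c\|\cdot-c\|_{L_r}$, the Poincar\'e--Sobolev inequality on $\Omega$, and the operator-norm bound for $E_{\Omega}$. The only cosmetic difference is that you test against the infimum-realizing constant $c^{\ast}$ while the paper uses the mean value $(E_{\Omega}u)_{\Omega}$; both serve the same purpose.
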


\begin{proof}
Because $\widetilde{\Omega}$ is the $(p,q)$-extension domain then  there exists a continuous linear extension operator 
\begin{equation}\label{ExtOper}
E_{\Omega}:L_p^1(\widetilde{\Omega}) \to L_q^1({\Omega})
\end{equation}
defined by formula
\[ 
(E_{\Omega}(u))(x) = \begin{cases}
u(x) & \text{if}\,\, x \in \widetilde{\Omega}, \\
\widetilde{u}(x) & \text{if}\,\, x \in {\Omega} \setminus \widetilde{\Omega},
\end{cases} 
\]
where $\widetilde{u}:{\Omega} \setminus \widetilde{\Omega} \to \mathbb R$ be an extension of the function $u$.

Hence for every function $u \in W_p^1(\widetilde{\Omega})$ we have the following estimates
\begin{multline}\label{ineq1}
\|u-u_{\widetilde{\Omega}}\|_{L_r(\widetilde{\Omega})} 
{} =\inf\limits_{c\in \mathbb R}\|u-c\|_{L_r(\widetilde{\Omega})}=\inf\limits_{c\in \mathbb R}\|E_{\Omega}u-c\|_{L_r(\widetilde{\Omega})} \\
{} \leq \|E_{\Omega}u-(E_{\Omega}u)_{{\Omega}}\|_{L_r(\widetilde{\Omega})} \leq \|E_{\Omega}u-(E_{\Omega}u)_{{\Omega}}\|_{L_r({\Omega})},
\end{multline}
where $u_{\widetilde{\Omega}}$ and $(E_{\Omega}u)_{\Omega}$ are mean values of corresponding functions $u$ and $E_{\Omega}u$:
$$
u_{\widetilde{\Omega}}=\frac{1}{|\widetilde{\Omega}|}\int\limits_{\widetilde{\Omega}} |u|^{r-2}u~dx,\,\,(E_{\Omega}u)_{\Omega}=\frac{1}{|{\Omega}|}\int\limits_{{\Omega}} |E_{\Omega}u|^{r-2}E_{\Omega}u~dx.
$$

Because there exists the compact embedding operator $i:W^1_q(\Omega)\hookrightarrow L_r(\Omega)$, then taking into account the Poincar\'e-Sobolev inequality \cite{M}
$$
\|E_{\Omega}u-(E_{\Omega}u)_{{\Omega}}\|_{L_r(\Omega)} \leq B_{q,r}(\Omega) \|\nabla \left(E_{\Omega}u\right) \|_{L_q(\Omega)}, \quad u \in W_q^1(\Omega), 
$$
and the continuity of the linear extension operator \eqref{ExtOper}, i.e.,
$$
\|E_{\Omega}u \|_{L_q^1(\Omega)} \leq \|E_{\Omega}\| \cdot \|u \|_{L_p^1(\widetilde{\Omega})}, 
$$ 
we obtain
\begin{multline}\label{ineq2}
\|E_{\Omega}u-(E_{\Omega}u)_{{\Omega}}\|_{L_r(\Omega)} \leq B_{q,r}(\Omega) \|\nabla \left(E_{\Omega}u\right) \|_{L_q(\Omega)} \\
\leq B_{q,r}(\Omega) \cdot \|E_{\Omega}\| \cdot \|\nabla u \|_{L_p(\widetilde{\Omega})}.
\end{multline}

Combining inequalities \eqref{ineq1} and \eqref{ineq2} we have
$$
\|u-u_{\widetilde{\Omega}}\|_{L_r(\widetilde{\Omega})}  \leq B_{p,r}(\widetilde{\Omega}) \cdot \|\nabla u \|_{L_p(\widetilde{\Omega})},
$$
where 
$$
B_{p,r}(\widetilde{\Omega}) \leq B_{q,r}(\Omega) \cdot \|E_{\Omega}\|.
$$

By the Min-Max Principle $\mu_{p,r}(\widetilde{\Omega})^{-\frac{1}{p}}=B_{p,r}(\widetilde{\Omega})$. Thus, we finally have
$$
\mu_{p,r}(\widetilde{\Omega}) \geq \frac{1}{\|E_{\Omega}\|^p} \cdot \left(\mu_{q,r}(\Omega)\right)^{\frac{p}{q}}.
$$
\end{proof}

By using Theorem~\ref{thm:mon} we have the following lower estimates of non-linear Neumann eigenvalues.

\begin{thm} 
\label{thm:est}  
Let $\widetilde{\Omega}_{\widetilde{\gamma}} \subset \mathbb{B}^n$ be an outward $\widetilde{\gamma}$-cuspidal domain, $1<\widetilde{\gamma}<\infty$. Set $\gamma = \widetilde{\gamma} - 1$. Suppose that $\frac{n+(n-1)\gamma}{n} < p < n+(n-1)\gamma$, $1 \le q < \frac{n p}{n + (n-1)\gamma}$, $r<\frac{nq}{n-q}$.
Then the first non-trivial Neumann $(p,r)$-eigenvalue
\begin{multline*}
\mu_{p,r}(\widetilde{\Omega}_{\widetilde{\gamma}}) \\
\geq {\left(|\mathbb{B}^n|^{1-\alpha} + |\mathbb{S}^{n-2}|^{\,1-\alpha} \,C_{\gamma}^{\alpha} \left[ \frac{1-\alpha}{\,n(1-\alpha) - \alpha (n-1)\gamma\,} \right]^{1-\alpha}\right)^{-\frac{1}{\alpha}}}\left(\mu_{q,r}(\mathbb B^n)\right)^{\frac{1}{\alpha}},\\ 0<\alpha=\frac{q}{p}<1.
\end{multline*}
where 
$$
C_{\gamma}=\pi  \left(1+(\gamma+1)^2\frac{\pi^4}{(\pi-1)^4}+\frac{1}{(\pi-1)^2}\right)^{\frac{p}{2}},\,\,\gamma=\widetilde{\gamma}-1.
$$
\end{thm}

\vskip 0.2cm
\noindent

\section{Declarations}

\begin{itemize}

\item \textbf{Funding}: Not applicable.

\item \textbf{Conflict of interest/Competing interests}: The authors declare that there is no conflict of interest.

\item \textbf{Ethics approval}: Not applicable.

\item \textbf{Availability of data and materials}: Data sharing not applicable to this article as no datasets were generated or analysed during the current study.

\item \textbf{Authors' contributions}: The authors contributed equally to this work. 

\end{itemize}

\vskip 2cm

\noindent
Vladimir Gol'dshtein  \,  \hskip 3.2cm Alexander Ukhlov

\noindent
Department of Mathematics   \hskip 2.25cm Department of Mathematics

\noindent
Ben-Gurion University of the Negev  \hskip 1.05cm Ben-Gurion University of the Negev

\noindent
P.O.Box 653, Beer Sheva, 84105, Israel  \hskip 0.7cm P.O.Box 653, Beer Sheva, 84105, Israel

\noindent
E-mail: vladimir@bgu.ac.il  \hskip 2.5cm E-mail: ukhlov@math.bgu.ac.il

\end{document}